\theoremstyle{definition}
\newtheorem{Def}{\textsc{Definition}}[section]
\newtheorem{Rem}{\textsc{Remark}}
\theoremstyle{plain}
\newtheorem{Prop}[Def]{\textsc{Proposition}}
\newtheorem{Ques}[Def]{\textsc{Question}}
\newtheorem{Lema}[Def]{\textsc{Lemma}}
\newtheorem{Corol}[Def]{\textsc{Corollary}}
\newtheorem{Teo}[Def]{\textsc{Theorem}}
\newtheorem{Cla}{Claim}
\newcommand{\restr}[2]{\left. #1 \right|_{#2}}
\newcommand{\rng}{\text{rng}}
\def\moverlay{\mathpalette\mov@rlay}
\def\mov@rlay#1#2{\leavevmode\vtop{%
   \baselineskip\z@skip \lineskiplimit-\maxdimen
   \ialign{\hfil$\m@th#1##$\hfil\cr#2\crcr}}}
\newcommand{\charfusion}[3][\mathord]{
    #1{\ifx#1\mathop\vphantom{#2}\fi
        \mathpalette\mov@rlay{#2\cr#3}
      }
    \ifx#1\mathop\expandafter\displaylimits\fi}
\newcommand{\Hom}{\text{Hom}([2^{\mathfrak{c}}]^{< \omega},2)}
\newcommand{\homm}{\text{Hom}([\omega]^{< \omega},2)}
\newcommand\reallywidehat[1]{%
\savestack{\tmpbox}{\stretchto{%
  \scaleto{%
    \scalerel*[\widthof{\ensuremath{#1}}]{\kern-.6pt\bigwedge\kern-.6pt}%
    {\rule[-\textheight/2]{1ex}{\textheight}}%WIDTH-LIMITED BIG WEDGE
  }{\textheight}% 
}{0.5ex}}%
\stackon[1pt]{#1}{\tmpbox}%
}
\title{Some pseudocompact-like properties in certain topological groups}
\thanks{\textit{2020 Mathematics Subject Classification.} Primary 54H11, 54B99, 22A05; Secondary 54E99}
\author{Artur Hideyuki Tomita$^{\dag}$}
\thanks{$^{\dag}$ The first listed author has received financial support from FAPESP 2021/00177-4}
\author{Juliane Trianon-Fraga$^{\ddag}$}
\thanks{$^{\ddag}$ The second listed author has received financial support from FAPESP 2019/12628-0.}
\newcommand{\Addresses}{% additional braces for segregating \footnotesize
  \bigskip
  
\indent \textsc{Depto de Matemática, Instituto de Matemática e Estatística, Universidade de São Paulo, Rua do Matão, 1010, CEP 05508-090, São Paulo, SP, Brazil.}\par\nopagebreak
  \textit{E-mail addresses:} tomita@ime.usp.br (A. H. Tomita), jtrianon@ime.usp.br (J. Trianon-Fraga).}
\begin{document}
\begin{abstract}
   We construct in ZFC a countably compact group without non-trivial convergent sequences of size $2^{\mathfrak{c}}$, answering a question of Bellini, Rodrigues and Tomita \cite{tomvinmat}. We also construct in ZFC a selectively pseudocompact group which is not countably pracompact, showing that these two properties are not equivalent in the class of topological groups. Using the same technique, we construct a group which has all powers selectively pseudocompact but is not countably pracompact, assuming the existence of a selective ultrafilter.
   
   \smallskip
   \noindent \textbf{Keywords.} Topological group, pseudocompactness, countable compactness, selective pseudocompactness, countable pracompactness, selective ultrafilter, convergent sequences.
\end{abstract}
\smallskip
   \maketitle

\section{Introduction}

In this paper, every topological space will be Tychonoff (Hausdorff and completely regular) and every topological group will be Hausdorff (thus, also Tychonoff). For an infinite set $X$, $[X]^{<\omega}$ will denote the family of all finite subsets of $X$, and $[X]^{\omega}$ will denote the family of all countable subsets of $X$. Recall that an infinite topological space $X$ is said to be
\begin{itemize}
    \item \textit{pseudocompact} if each continuous real-valued function on $X$ is bounded;
    \item \textit{countably compact} if every infinite subset of $X$ has an accumulation point in $X$;
    \item \textit{countably pracompact} if there exists a dense subset $D$ in $X$ such that every infinite subset of $D$ has an accumulation point in $X$.
\end{itemize}

 We denote the set of non-principal (free) ultrafilters on $\omega$ by $\omega^*$. Given $p \in \omega^*$, $x \in X$ and a sequence $\{x_n: n \in \omega\} \subset X$, following \cite{Bernstein}, we say that $x$ is a $p-$limit point of $(x_n)_{n \in \omega}$ if, for every neighborhood $U$ of $x$, we have that $\{n \in \omega: x_n \in U\} \in p$. If $X$ is Hausdorff, a sequence $\{x_n: n \in \omega\}$ has at most one $p$-limit point $x$ and we write $x=p-\lim_{n \in \omega}x_n$. We may consider a topology on $\omega^*$ which has, for each $A \subset \omega$, the sets $A^* \doteq \{p \in \omega^*: A \in p\}$ as basic open sets. Also, we recall that $p \in \omega^*$ is a $P-$\textit{point} if whenever $\{V_n: n \in \omega\}$ is a countable family of neighborhoods of $p$, $p \in  \text{int}\big( \bigcap_{n \in \omega} V_n\big)$, and $p \in \omega^*$ is a \textit{weak} $P-$\textit{point} if it is not an accumulation point of any countable subset of $\omega^*$. Kunen showed in ZFC that there exists $2^{\mathfrak{c}}$ points in $\omega^*$ which are weak $P-$points but not $P$-points \cite{kunen}. 
 
 It is not hard to show that $x \in X$ is an accumulation point of a sequence $\{x_n: n \in \omega\} \subset X$ if and only if there exists $p \in \omega^*$ such that $x= p-\lim_{n \in \omega}x_n$. Thus, it follows easily that
\begin{itemize}
\item $X$ is countably compact if and only if every sequence $\{x_n:n \in \omega\} \subset X$ has a $p-$limit, for some $p \in \omega^*$.
    \item $X$ is countably pracompact if and only if there exists a dense subset $D$ in $X$ such that every sequence $\{x_n: n \in \omega\} \subset D$ has a $p-$limit in $X$, for some $p \in \omega^*$.
\end{itemize} 
For pseudocompact spaces, a similar equivalence holds: $X$ is pseudocompact if and only if for every countable family $\{U_n: n \in \omega\}$ of nonempty open sets of $X$, there exists $x \in X$ and $p \in \omega^*$ such that, for each neighborhood $V$ of $x$, $\{n \in \omega: V \cap U_n \neq \emptyset\} \in p$.
 
Since the introduction of pseudocompactness by Hewitt \cite{hewitt}, many related concepts have emerged, which provide new topological spaces, with different properties:

\begin{Def}
Let $X$ be a topological space.
\begin{enumerate}[(1)]
\item For $p \in \omega^*$, $X$ is called $p-$compact if every sequence of points in $X$ has a $p-$limit. 
    \item For $p \in \omega^*$, $X$ is called $p-$\textit{pseudocompact} if for every countable family $\{U_n: n \in \omega\}$ of nonempty open sets of $X$, there exists $x \in X$ such that, for each neighborhood $V$ of $x$, $\{n \in \omega: V \cap U_n \neq \emptyset\} \in p$.
    \item $X$ is called \textit{ultrapseudocompact} if $X$ is $p-$pseudocompact for every $p \in \omega^*$.
    
    \item $X$ is called \textit{selectively pseudocompact}\footnotemark \ if for each sequence $(U_n)_{n \in \omega}$ of nonempty open subsets of $X$ there is a sequence $\{x_n: n \in \omega\} \subset X$, $x \in X$ and $p \in \omega^*$ such that $x=p-\lim_{n \in \omega}x_n$ and, for each $n \in \omega$, $x_n \in U_n$.
\footnotetext{This concept was originally defined in \cite{yasser2} under the name \textit{strong pseudocompactness}, but later the name was changed, since there were already two different properties named in the previous way (in \cite{propriedade1} and \cite{propriedade2}).}
    \item For $p \in \omega^*$, $X$ is called \textit{selectively} $p-$\textit{pseudocompact}\footnotemark \ if, for each sequence $(U_n)_{n \in \omega}$ of nonempty open subsets of $X$, there is a sequence $(x_n)_{n \in \omega}$ of points in $X$ and $x \in X$ such that $x=p-\lim_{n \in \omega} x_n$ and, for each $n \in \omega$, $x_n \in U_n$.

    \end{enumerate}
\end{Def}

 \footnotetext{Similarly, this concept was defined originally under the name \textit{strong }$p$\textit{-pseudocompactness.}}

The concept of $p-$compactness was introduced in \cite{Bernstein}, $p-$pseudocompactness and ultrapseudocompactness in \cite{ginsa}, selective $p$-pseudocompactness in \cite{yasser1} and selective pseudocompactness in \cite{yasser2}. Regarding these notions, it follows straight from the definitions that for each $p \in \omega^*$,
~\\
\begin{equation*}
p-\text{compactness} \Rightarrow \text{selective }p-\text{pseudocompactness} \Rightarrow p-\text{pseudocompactness} \Rightarrow \text{pseudocompactness}
\end{equation*}
\begin{center}and\end{center}
\begin{equation*}
\text{ultrapseudocompactness} \Rightarrow p-\text{pseudocompactness}.
\end{equation*}
For each $p \in \omega^*$, \cite{yasser1} shows an example of a $p-$pseudocompact space which is not ultrapseudocompact. Also it is evident that
\begin{align*}
&\text{countable compactness} \Rightarrow \text{countable pracompactness} \Rightarrow \\ &\Rightarrow \text{selective pseudocompactness} \Rightarrow  \text{pseudocompactness}.
\end{align*} 
In \cite{Gillman}, there is an example of a countably compact space which is not $p-$pseudocompact for any $p \in \omega^*$, and in \cite{salvador}  an example of an ultrapseudocompact space which is not selectively pseudocompact. 

For topological groups we can say more:
\begin{Teo}[\cite{garchis}]
For a topological group $G$, the following conditions are equivalent.
\begin{enumerate}[(1)]
    \item $G$ is pseudocompact.
    \item There is a $p \in \omega^*$ such that $G$ is $p-$pseudocompact.
    \item $G$ is ultrapseudocompact.
\end{enumerate}
\end{Teo}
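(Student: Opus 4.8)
The plan is to get the implications $(3)\Rightarrow(2)\Rightarrow(1)$ for free from the chain displayed before the theorem: ultrapseudocompactness means $p$-pseudocompactness for every $p\in\omega^*$ (hence for some $p$), and $p$-pseudocompactness implies pseudocompactness. So the whole content is $(1)\Rightarrow(3)$: a pseudocompact group is $p$-pseudocompact for \emph{every} $p\in\omega^*$. To access the group structure I would invoke the Comfort--Ross theorem: if $G$ is a pseudocompact topological group then its Stone--\v Cech compactification $K:=\beta G$ carries a compact group structure extending that of $G$, and $G$ is $G_\delta$-dense in $K$ (every nonempty $G_\delta$ subset of $K$ meets $G$). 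This is the only place where "group" enters essentially; for arbitrary spaces pseudocompactness does \emph{not} imply $p$-pseudocompactness, as the example from \cite{Gillman} recalled above shows.

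Fix $p\in\omega^*$ and a sequence $(U_n)_{n\in\omega}$ of nonempty open subsets of $G$, choose $x_n\in U_n$, and write $U_n=O_n\cap G$ with $O_n$ open in $K$. The goal is to produce $g\in G$ such that every neighbourhood $V$ of $g$ satisfies $\{n\in\omega:V\cap U_n\neq\emptyset\}\in p$. The strategy is to push the problem down to a compact \emph{metrizable} quotient of $K$, where it is trivial, and lift the witness back. Recall that a compact group $K$ is the inverse limit of its metrizable quotients $K/N$, with $N$ ranging over the closed normal $G_\delta$ subgroups; consequently the $N$-saturated open sets (preimages of opens under the quotient maps) form a base for $K$, and any countable intersection of such $N$'s is again a closed normal $G_\delta$ subgroup with metrizable quotient. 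Using the base property, for each $n$ pick a closed normal $G_\delta$ subgroup $N_n$ and an open $N_n$-saturated set $\tilde O_n$ with $x_n\in\tilde O_n\subseteq O_n$. Put $N:=\bigcap_n N_n$, let $M:=K/N$ (compact metrizable) and $\pi:K\to M$ the open quotient map. Each $\tilde O_n$ is $N$-saturated, so $\tilde O_n=\pi^{-1}(W_n)$ for nonempty open $W_n\subseteq M$, and $\tilde U_n:=\tilde O_n\cap G\subseteq U_n$ is nonempty since $x_n\in\tilde O_n\cap G$.

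Now solve the problem in $M$. Being compact, $M$ is $p$-compact for every ultrafilter (all $p$-limits of sequences exist by compactness), hence $p$-pseudocompact; applied to $(W_n)$ this yields $m\in M$ --- one may take $m=p\text{-}\lim_{n}\pi(x_n)$ --- such that every neighbourhood of $m$ meets $W_n$ for $p$-many $n$. Since $N$ is a closed $G_\delta$ and $G$ is $G_\delta$-dense, $\pi(G)=M$, so we lift $m$ to some $g\in G$ with $\pi(g)=m$. The key point is the exact transfer of the neighbourhood condition: because each $\tilde O_n$ is $N$-saturated and $\pi$ is open, for any neighbourhood $V$ of $g$ in $K$ one has $V\cap\tilde O_n\neq\emptyset$ if and only if $\pi(V)\cap W_n\neq\emptyset$, and $\pi(V)$ is a neighbourhood of $m$; hence $\{n:V\cap\tilde O_n\neq\emptyset\}=\{n:\pi(V)\cap W_n\neq\emptyset\}\in p$. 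As $\tilde O_n\subseteq O_n$ and $G$ is dense, $V\cap\tilde O_n\neq\emptyset$ forces $V\cap G$ to meet $U_n$, so $\{n:(V\cap G)\cap U_n\neq\emptyset\}\in p$ for every neighbourhood $V$ of $g$; since the traces $V\cap G$ run over all neighbourhoods of $g$ in $G$, the point $g$ witnesses $p$-pseudocompactness, completing $(1)\Rightarrow(3)$.

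The step I expect to be the main obstacle is precisely this reduction to the metrizable quotient, i.e. arranging that the neighbourhood condition transfers verbatim between $K$ and $M$. The naive idea of taking the $p$-limit $z=p\text{-}\lim_n x_n$ in $K$ and approximating it by a point of $G$ fails, because membership in the relevant closed "$p$-limit set" $\bigcap_{A\in p}\overline{\bigcup_{n\in A}O_n}$ is not an open condition, so mere density of $G$ --- even $G_\delta$-density --- does not place a point of $G$ inside it. The saturation trick, shrinking each $O_n$ to a common-quotient-saturated open set $\tilde O_n$, is what repairs this: it makes "$V$ meets $\tilde O_n$" depend only on the metrizable quotient, so that a witness found downstairs pulls back to a genuine witness in $G$.
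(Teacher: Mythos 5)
The paper does not prove this theorem: it is quoted from \cite{garchis} and used as a black box, so there is no in-paper argument to compare yours against. Your proof is correct and follows the standard route: the implications $(3)\Rightarrow(2)\Rightarrow(1)$ are definitional, and for $(1)\Rightarrow(3)$ you invoke Comfort--Ross to realize a pseudocompact $G$ as a $G_\delta$-dense subgroup of the compact group $K=\beta G$, then reduce to a compact metrizable quotient $K/N$ by shrinking each $O_n$ to an $N$-saturated open set, so that the $p$-limit witness found downstairs lifts back into $G$ (using that cosets of the closed $G_\delta$ subgroup $N$ meet $G$). Your diagnosis of why the naive ``take the $p$-limit in $\beta G$ and approximate by density'' argument fails, and why saturation repairs it, is exactly where the group structure enters, and the supporting facts you use (saturated opens form a base, countable intersections of closed normal $G_\delta$ subgroups have metrizable quotient) are standard consequences of Peter--Weyl. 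The only cosmetic gap is in the final density step: to conclude that $V\cap\tilde O_n\neq\emptyset$ forces $V\cap G$ to meet $U_n$ you should take the neighbourhood $V$ of $g$ in $K$ to be open, which is harmless since the condition is monotone in $V$.
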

\noindent In \cite{tomita1}, there is an example of a selectively pseudocompact group which is not countably compact. The question whether pseudocompactness implies selective pseudocompactness in topological groups was posed in \cite{yasser2}, and solved by Garcia-Ferreira and Tomita, who proved that there exists a pseudocompact group which is not selectively pseudocompact \cite{tomita1}. Hence, the selective pseudocompactness is not another equivalent notion for pseudocompactness in topological groups. In this paper we prove that there exists a topological group which is selectively pseudocompact but is not countably pracompact. Therefore, the notion of countably pracompactness is even more strict in topological groups.
\vspace{0.5cm}

In what follows, we recall some basic facts about selective ultrafilters.
\begin{Def}
A selective ultrafilter on $\omega$ is a free ultrafilter $p$ on $\omega$ such that for every partition $\{A_n: n \in \omega\}$ of $\omega$, either there exists $n \in \omega$ such that $A_n \in p$ or there exists $B \in p$ such that $|B \cap A_n| =1$ for every $n \in \omega$.
\end{Def}
When handling the combinatorial properties of selective ultrafilters, it is often useful to use some of their equivalent properties, like those given by the well know proposition below (for a proof, see \cite{negre}, for instance).
\begin{Prop}
Let $p \in \omega^*$. The following are equivalent.
\begin{enumerate}[a)]
    \item $p$ is a selective ultrafilter.
    \item For every $f \in \omega^{\omega}$, there exists $A \in p$ such that $\restr{f}{A}$ is either constant or one-to-one.
    \item For every function $f: [\omega]^{2} \to 2$ there exists $A \in p$ such that $\restr{f}{[A]^2}$ is constant.
\end{enumerate}
\end{Prop}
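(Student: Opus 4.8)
The plan is to show all three conditions are equivalent by proving (a)$\Rightarrow$(b), (b)$\Rightarrow$(a), (c)$\Rightarrow$(b) and (b)$\Rightarrow$(c); the first three are elementary, and the real effort goes into the Ramsey-type implication (b)$\Rightarrow$(c).

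For (a)$\Rightarrow$(b), given $f\in\omega^{\omega}$ I would look at the partition of $\omega$ into the nonempty fibers $\{f^{-1}(k):k\in\omega\}$. If $f$ has finite range some fiber is in $p$ and $\restr{f}{A}$ is constant there; otherwise selectivity produces either a fiber in $p$ (giving $\restr{f}{A}$ constant) or a set $A\in p$ meeting each fiber in exactly one point (giving $\restr{f}{A}$ one-to-one). For the converse (b)$\Rightarrow$(a), given a partition $\{A_n:n\in\omega\}$ I define $f$ by $f(k)=n\iff k\in A_n$ and apply (b): a constant value $n$ yields $A_n\in p$, while a one-to-one $\restr{f}{A}$ yields a set meeting each block at most once, which I then \emph{pad} by adjoining one arbitrary point of each missed block to obtain the required $B\in p$ with $|B\cap A_n|=1$ for all $n$ (note $B\supseteq A$ keeps $B$ in $p$). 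Finally, for (c)$\Rightarrow$(b) I color $\{m,n\}$ by $0$ if $f(m)=f(n)$ and by $1$ otherwise; a homogeneous $A\in p$ of color $0$ makes $\restr{f}{A}$ constant, and one of color $1$ makes it one-to-one.

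The substantial direction is (b)$\Rightarrow$(c). Given $f:[\omega]^2\to2$, for each $n$ the ultrafilter chooses a majority color $\epsilon(n)\in2$ and a set $B_n=\{m>n:f(\{n,m\})=\epsilon(n)\}\in p$; a second use of the ultrafilter fixes a color $i\in 2$ and a set $C\in p$ with $\epsilon\equiv i$ on $C$. The key observation is that any increasing $A=\{a_0<a_1<\cdots\}\in p$ with $A\subseteq C$ and with the \emph{triangular} property $a_m\in\bigcap_{k<m}B_{a_k}$ is homogeneous of color $i$, since for $k<m$ one gets $f(\{a_k,a_m\})=\epsilon(a_k)=i$. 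So everything reduces to producing such a set in $p$.

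The hard part, and the place where selectivity is used in full strength, is this diagonalization lemma. I would pass to the decreasing family $C_n=\bigcap_{k\le n}B_k\in p$ (intersecting with tails so that $\min C_n>n$ and $\bigcap_n C_n=\emptyset$), for which the triangular condition follows from the simpler requirement $a_m\in C_{a_{m-1}}$. Selectivity, applied to the shell partition $\{C_{n-1}\setminus C_n:n\in\omega\}$ (no block of which lies in $p$, each being disjoint from some $C_n\in p$), yields a transversal, and this already exhibits $p$ as a $P$-point, i.e. the $C_n$ admit a pseudo-intersection in $p$; similarly, selectivity on partitions into finite sets exhibits $p$ as a $Q$-point. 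The genuine obstacle is that a bare transversal or pseudo-intersection need \emph{not} satisfy the growth requirement $\psi(a_m)>a_{m-1}$, where $\psi(m)$ denotes the rank of $m$ in the shell filtration, because a low-index shell may contain arbitrarily large elements. Reconciling membership in $p$ with this rapid growth is exactly where the combined $P$-point and $Q$-point strength of a selective ultrafilter must be invoked, and this is the step I expect to demand the most care.
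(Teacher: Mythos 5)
A preliminary remark: the paper does not prove this proposition at all --- it is quoted as a well-known fact with a pointer to the literature --- so there is no in-paper argument to compare yours against, and I am judging the proposal on its own. Your implications (a)$\Rightarrow$(b), (b)$\Rightarrow$(a) and (c)$\Rightarrow$(b) are correct and standard. The skeleton you set up for (b)$\Rightarrow$(c) --- the majority colours $\epsilon(n)$, the sets $B_n\in p$, the set $C\in p$ on which $\epsilon$ is constantly $i$, and the observation that any $A\subseteq C$ in $p$ with the triangular property is $i$-homogeneous, which reduces to the single condition $a_m\in C_{a_{m-1}}$ for the decreasing family $C_n=\bigcap_{k\le n}B_k$ --- is also the right reduction.

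The genuine gap is that the one step carrying all the weight, namely producing $A=\{a_0<a_1<\cdots\}\in p$ with $a_m\in C_{a_{m-1}}$, is announced but never carried out: you correctly observe that a pseudo-intersection of the $C_n$ need not grow fast enough and then stop exactly there, so as written the central implication is not proved. To close it: take a pseudo-intersection $D\in p$ of the $C_n$ with $D\subseteq C$, set $g(m)=\max(D\setminus C_m)+1$ for $m\in D$ (finite because $D\setminus C_m$ is finite), and define $k_0=0$ and $k_{i+1}=\max\bigl(\{g(m): m\in D,\ m<k_i\}\cup\{k_i+1\}\bigr)$, so that every $n\in D$ with $n\ge k_{i+1}$ lies in $C_m$ for every $m\in D$ with $m<k_i$. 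The intervals $[k_i,k_{i+1})$ partition $\omega$ into finite sets, so no block is in $p$ and selectivity yields $E\in p$ meeting each interval at most once. The point you must not gloss over is that $E$ itself still fails, since consecutive elements of $E$ may sit in adjacent intervals; one splits $E$ by the parity of the interval index and keeps the half that lies in $p$. In that half any two elements $m<n$ are separated by a full interval, so $m<k_{i+1}$ and $n\ge k_{i+2}\ge g(m)$, whence $n\in C_m$ and the triangular property holds. The naive alternative of taking ``every other element'' of $E$ does not work, because a set obtained by discarding alternate points of a member of $p$ need not remain in $p$; it is precisely this parity device (or an equivalent one) that your outline is missing.
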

The existence of selective ultrafilters is independent of ZFC. In fact, Martin's Axiom (MA) implies the existence of $2^{\mathfrak{c}}$ selective ultrafilters \cite{Blass}, while there is a model of ZFC in which there are no selective ultrafilters \cite{Shelah}.

Assuming the existence of a single selective ultrafilter, we also prove in this paper that there exists a topological group which is not countably pracompact and has all powers selectively pseudocompact.
\vspace{0.5cm}

 The question whether there exists a countably compact group without non-trivial convergent sequences in ZFC has been left open for a long time, and it is related to an old open problem posed by Comfort, after proving, together with Ross, that the product of any family of pseudocompact topological  groups is pseudocompact \cite{comross}. Comfort asked whether there are countably compact topological groups whose product is not countably compact\footnotemark. The first consistent positive answer was given by Douwen, under MA \cite{Douwen}. More specifically, Douwen proved the two following lemmas.
 
 \footnotetext{More generally, he asked in the survey book \textit{Open Problems
in Topology} whether there is, for every (not necessarily infinite) cardinal number $\alpha  \leq 2^{\mathfrak{c}}$, a topological group $G$ such that $G^{\gamma}$ is countably compact for all cardinals $\gamma < \alpha$, but $G^{\alpha}$ is
not countably compact.}
 
 \begin{Lema}[\cite{Douwen}]\label{com1}
 (ZFC) Every infinite Boolean countably compact group without non-trivial convergent sequences contains two countably compact subgroups whose product is not countably compact.
 \end{Lema}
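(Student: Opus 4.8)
The plan is to exhibit a one-to-one sequence $(x_n)_{n\in\omega}$ in $G$ together with two countably compact subgroups $H_0,H_1\le G$, each containing $\{x_n:n\in\omega\}$, such that the diagonal sequence $(x_n,x_n)_{n\in\omega}$ has no accumulation point in $H_0\times H_1$. The guiding observation is that, since $G$ is Hausdorff, a $q$-limit of a sequence is unique; hence $(x_n,x_n)$ has an accumulation point $(u,v)$ in $H_0\times H_1$ if and only if there is $q\in\omega^*$ with $u=q\text{-}\lim_n x_n=v$ lying in $H_0\cap H_1$. Consequently $H_0\times H_1$ fails to be countably compact as soon as $H_0\cap H_1$ contains no accumulation point of $(x_n)$ in $G$, while each factor stays countably compact. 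The task thus reduces to building two countably compact subgroups that share the sequence but whose intersection avoids its cluster set.

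Two preliminary facts organize the construction. First, in any countably compact space a sequence with a unique cluster point must converge to it (otherwise a subsequence omits a neighborhood of that point and, being in a countably compact space, clusters at a second point); since $G$ has no nontrivial convergent sequences, every one-to-one sequence in $G$ has at least two distinct cluster points, so there is genuine room to separate limits. Second, being countably compact the group $G$ is pseudocompact, hence precompact, so I may regard $G$ as a dense subgroup of a compact Boolean group $K\cong 2^{\kappa}$. In $K$ every $q$-limit $q\text{-}\lim_n x_n$ exists, a cluster point of $(x_n)$ lies in $G$ exactly for those $q$ in a certain nonempty set $E\subseteq\omega^*$, and the first fact says the assignment $q\mapsto q\text{-}\lim_n x_n$ is nonconstant on $E$.

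With this in place I would construct $H_0$ and $H_1$ by a transfinite recursion of length $|G|$. Starting from the $\mathbb{F}_2$-span of $\{x_n\}$, I would enumerate all $\omega$-sequences lying in the ambient group and, at each stage, adjoin to each $H_i$ a suitable $q$-limit taken inside $G$ (possible after closing under the group operation, using the vector-space structure of the Boolean group) so as to guarantee that in the end every sequence contained in $H_i$ has a cluster point in $H_i$, i.e.\ that $H_i$ is countably compact. The separation is enforced by making the two recursions assign \emph{different} witnessing ultrafilters to the fixed sequence $(x_n)$, so that no single cluster point of $(x_n)$ is ever placed in both $H_0$ and $H_1$; keeping $\{x_n\}$ inside both subgroups while steering their cluster points apart is precisely where the Boolean structure (free manipulation of finite sums over $\mathbb{F}_2$) and the abundance of cluster points from the first fact are used.

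The main obstacle is the simultaneous bookkeeping in the recursion: each $H_i$ must be closed off into a countably compact group, yet the intersection $H_0\cap H_1$ must never acquire any cluster point of $(x_n)$. One must verify that these two demands do not collide — that whenever countable compactness forces a cluster point into $H_i$, it can be chosen outside the other subgroup's copy of the cluster set of $(x_n)$ — and that a recursion of length $|G|$ can exhaust all relevant sequences while the subgroups remain of the appropriate size. Controlling this interaction, rather than any single step, is the crux of the argument.
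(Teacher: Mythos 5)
The paper does not prove this lemma; it is quoted from van Douwen's article, so your proposal can only be judged on its own terms. Your reduction is sound: by uniqueness of $q$-limits, a cluster point of the diagonal sequence $(x_n,x_n)$ in $H_0\times H_1$ has the form $(u,u)$ with $u$ a cluster point of $(x_n)$ lying in $H_0\cap H_1$, so it suffices to keep $H_0\cap H_1$ disjoint from the cluster set $C$ of $(x_n)$ while closing each $H_i$ off to a countably compact subgroup. But two genuine gaps remain, and they are exactly the points your last paragraph defers. First, since $H_0$ and $H_1$ are subgroups containing every $x_n$, they both contain $S=\operatorname{span}\{x_n:n\in\omega\}$, so the construction is dead on arrival unless $C\cap S=\emptyset$; nothing in your outline secures this. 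It can be arranged, but only via a lemma you do not supply: for any countable $N\subseteq G$ there is an infinite $A\subseteq\omega$ whose cluster set $C_A$ misses $N$ (for each $c\in C\cap N$ the sequence does not converge to $c$, so one may discard a neighborhood of $c$ while keeping an infinite index set, and a diagonal argument handles all countably many $c$ at once); one then replaces $(x_n)$ by $(x_n)_{n\in A}$ with $N=S$.

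Second, ``at least two cluster points'' gives far too little room for the recursion, and the separation mechanism is misidentified: it is not achieved by assigning different witnessing ultrafilters to $(x_n)$ --- the sequence $(x_n)$ lies in $H_0\cap H_1$ and must cluster inside each $H_i$ separately, which is harmless --- but by maintaining the algebraic invariant $H_0^{\alpha}\cap H_1^{\alpha}=S$. Concretely, whenever countable compactness forces you to adjoin to $H_0^{\alpha}$ a cluster point $c$ of a sequence with infinite range, you must choose $c\notin H_0^{\alpha}+H_1^{\alpha}$, a subgroup of size $<\mathfrak{c}$ at each stage of a length-$\mathfrak{c}$ bookkeeping recursion. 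For such a choice to exist you need every one-to-one sequence in $G$ to have at least $\mathfrak{c}$ cluster points; this is true (a binary tree of infinite index sets separated by disjoint closed neighborhoods yields $\mathfrak{c}$ distinct cluster points along its branches by countable compactness), but it is a substantive lemma, not a consequence of your Fact~1. With these two lemmas inserted and the invariant made explicit your outline does close up into a correct proof, but as written the interaction you flag as ``the crux'' is genuinely unresolved.
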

Tomita proved in ZFC another version of Lemma \ref{com1}: the existence of a countably compact Abelian group without non-trivial convergent sequences implies the existence of a countably compact group whose square is not countably compact \cite{DouTom1}. Also, a version for finite and countable powers (for torsion groups) and finite powers (for non-torsion groups) appears in \cite{tomitaTopAppl2019ZFC}.
 
 \begin{Lema}[\cite{Douwen}]\label{com2}
(MA) There exists an infinite Boolean countably compact group without non-trivial convergent sequences.
 \end{Lema}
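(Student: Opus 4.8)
The plan is to realize the desired group as a subgroup $G$ of the compact Boolean group $2^{\mathfrak{c}}$, that is, the functions $\mathfrak{c}\to 2$ under coordinatewise addition mod $2$ with the product topology, built by a transfinite recursion of length $\mathfrak{c}$. I would lean on two reformulations. First, since $2^{\mathfrak{c}}$ is compact, every sequence has a $p$-limit in $2^{\mathfrak{c}}$ for each $p\in\omega^*$, computed coordinatewise; by the characterization recalled above, $G$ is then countably compact exactly when every sequence of points of $G$ has a $p$-limit lying in $G$ for some $p$. Second, passing to a subsequence of distinct terms shows that $G$ has a non-trivial convergent sequence iff it has a one-to-one convergent sequence, and in the compact space $2^{\mathfrak{c}}$ a one-to-one sequence converges iff it has a unique accumulation point. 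Hence both requirements reduce to a single demand: \emph{every one-to-one sequence in $G$ has at least two distinct accumulation points lying in $G$}. Two distinct accumulation points in $G$ give the accumulation point needed for countable compactness and simultaneously obstruct convergence.

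I would construct $G=\bigcup_{\alpha<\mathfrak{c}}G_\alpha$ as an increasing continuous chain of subgroups with $|G_\alpha|<\mathfrak{c}$, fixing in advance a bookkeeping that assigns to each stage a one-to-one sequence; since $\mathrm{cf}(\mathfrak{c})>\omega$ and each sequence uses only countably many points, one arranges that every one-to-one sequence eventually lying in $G$ is treated at a stage $\alpha$ at which all of its terms already belong to $G_\alpha$. It is convenient to view the generators of $G$ as the rows of a $\mathfrak{c}\times\mathfrak{c}$ matrix over $2$ whose entries are committed only on an initial block of columns at each stage, leaving fresh columns (coordinates) free for later use. At stage $\alpha$, given the assigned sequence $(x_n)_{n\in\omega}$ with all $x_n\in G_\alpha$, I would pick two distinct selective ultrafilters $p,q$, commit the current generators on a block of fresh coordinates so that $(x_n)$ takes each of $0$ and $1$ along a set belonging to $p$ but not to $q$, and then adjoin the two limits $p\text{-}\lim_{n\in\omega}x_n$ and $q\text{-}\lim_{n\in\omega}x_n$ as new generators of $G_{\alpha+1}$. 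The fresh coordinates force these two limits to differ there, so they are distinct points of $G$, and the oscillation they create on cofinally many coordinates (accumulated over all stages) independently forbids convergence in $2^{\mathfrak{c}}$.

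Martin's Axiom enters decisively at the step of committing fresh coordinates. The point is that the value of a group element $x_n$ at a coordinate $\gamma$ is the $\mathbb{F}_2$-sum of the values there of the finitely many generators composing it, so the pattern $\big(x_n(\gamma)\big)_{n}$ one may prescribe is not arbitrary: it must respect every linear dependence among the group elements occurring in the sequences handled so far. One therefore cannot simply write down an oscillating or limit-separating column; one must choose the ultrafilters $p,q$ and the column entries \emph{compatibly} with all these $\mathbb{F}_2$-linear constraints, and compatibly with the columns already committed for earlier sequences. As recalled above, MA supplies $2^{\mathfrak{c}}$ selective ultrafilters, whose Ramsey property is exactly what lets one diagonalize against the infinitely many dependencies attached to a single sequence; and MA, via $\mathfrak{p}=\mathfrak{c}$ and the genericity it furnishes for ccc posets, is what allows the fewer-than-$\mathfrak{c}$ coordinate-and-limit requirements inherited from earlier stages to be met simultaneously. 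In ZFC alone one cannot in general satisfy such a system when $\mathfrak{c}>\aleph_1$, which is precisely why the lemma is stated under MA. The group $G$ produced is an infinite Boolean group in which every one-to-one sequence acquires two distinct accumulation points, hence is countably compact and free of non-trivial convergent sequences.

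I expect the main obstacle to be this interaction between the Boolean (linear) group structure and the freedom to place accumulation points. Each adjunction of limits to secure countable compactness manufactures new group elements, and hence new one-to-one sequences, that must themselves later receive separated limits while respecting the oscillation already imposed; the bookkeeping must therefore be organized so that this closes off within $\mathfrak{c}$ steps, never forcing a convergent sequence and never exceeding size $\mathfrak{c}$. Threading the choice of selective ultrafilters and of each fresh column through all of the $\mathbb{F}_2$-linear dependencies, uniformly over the previously handled sequences, is the delicate technical heart of the argument, and it is exactly here that the combinatorics of selective ultrafilters together with the genericity of MA are indispensable.
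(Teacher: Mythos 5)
The paper does not actually prove this lemma; it is quoted from van Douwen's paper, so there is no internal proof to compare against. That said, your key reduction is correct and is the standard one in this literature: in the compact group $2^{\mathfrak{c}}$ a one-to-one sequence converges iff it has a unique accumulation point, so it suffices to build $G\leq 2^{\mathfrak{c}}$ in which every one-to-one sequence acquires two distinct accumulation points lying in $G$; the increasing chain of length $\mathfrak{c}$ with bookkeeping (legitimate since $\mathrm{cf}(\mathfrak{c})>\omega$) is also the right skeleton.

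The genuine gaps sit exactly at the steps you defer. First, ``adjoin $p\text{-}\lim x_n$ and $q\text{-}\lim x_n$ as new generators at stage $\alpha$'' is not a well-defined operation: a $p$-limit in $2^{\mathfrak{c}}$ is computed coordinatewise on all $\mathfrak{c}$ coordinates, almost none of which are committed at stage $\alpha$. What one really adjoins are abstract generators $y_p,y_q$ together with the constraint that \emph{every} coordinate-homomorphism $\chi_\gamma:G\to 2$ chosen at any stage must satisfy $\chi_\gamma(y_p)=p\text{-}\lim_{n}\chi_\gamma(x_n)$ and likewise for $q$. Hence at the end each of the $\mathfrak{c}$ coordinates must respect all $\mathfrak{c}$-many ultrafilter-limit equations accumulated over the whole construction (not just the fewer than $\mathfrak{c}$ present at one stage), and there must be enough such homomorphisms with prescribed finite behavior to separate points and to force $y_p\neq y_q$. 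Establishing this is the actual content of the lemma: one must extract, inside each ultrafilter, subsequences on which the relevant elements are linearly independent over the field $2$ and then disjointify these witnesses across all sequences meeting a given countable subgroup --- precisely the role played by Lemma \ref{novo}, Corollary \ref{antigo}, Proposition \ref{suitable} and Lemma \ref{Homs} in the present paper, and by their analogues in van Douwen's argument. None of this appears in your sketch. Second, the appeal to MA is asserted rather than instantiated: you exhibit no ccc poset and do not verify that the dense sets to be met number fewer than $\mathfrak{c}$ (at the final stage they do not), and the existence of $2^{\mathfrak{c}}$ selective ultrafilters does not by itself let you prescribe the oscillation of $(x_n(\gamma))_n$ relative to two given ultrafilters, since those values are forced by the linear dependences among the $x_n$ and by the columns already committed. (Your phrase ``takes each of $0$ and $1$ along a set belonging to $p$ but not to $q$'' is also literally impossible as stated; you mean $\{n:x_n(\gamma)=0\}\in p\setminus q$.) The architecture is right, but the mathematical content of the lemma lives in the homomorphism-existence and compatibility arguments you have left as black boxes.
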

 
Together, the two lemmas above show that, under MA, there exists two countably compact groups whose product is not countably compact. Some years later, a more general Comfort's question (see the footnote in this page) was solved for every cardinal $\kappa \leq 2^{\mathfrak{c}}$, assuming the existence of $2^{\mathfrak{c}}$ selective ultrafilters and that $2^{\mathfrak{c}}=2^{<2^{\mathfrak{c}}}$ \cite{tomita34}. Also, in \cite{tomita2015} it was constructed the first torsion free example of a topological group whose least power that fails to be countably compact is $\omega$, using $\mathfrak{c}$ incomparable (according to the Rudin-Keisler ordering in $\omega^*$) selective ultrafilters.

Using tools outside ZFC, many other examples of countably compact groups without non-trivial convergent sequences were given over the years. The first one appeared in \cite{Haj}, under CH. In \cite{koszmider&tomita&watson}, an example was obtained from Martin's Axiom for countable posets, and in \cite{selective} from a single selective ultrafilter, improving the technique, since CH and MA imply the existence of selective ultrafilters. Nevertheless,  \cite{szeptycki&tomita} showed that the existence of such groups does not imply the existence of selective ultrafilters. It was left open for a long time whether there exists an example in ZFC. Finally, in 2021, Hrušák, van Mill, Ramos-García, and Shelah \cite{Michael} proved that:
\begin{Teo}[\cite{Michael}] \label{mich}
In ZFC, there exists a Hausdorff countably compact topological Boolean group (of size $\mathfrak{c}$) without non-trivial convergent sequences.
\end{Teo}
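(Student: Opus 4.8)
The plan is to realize the desired group as a dense Boolean subgroup $G$ of the Cantor cube $2^{\mathfrak{c}}$, where the operation is coordinatewise addition modulo $2$ and the topology is the product topology. Because the group is Boolean, every element is its own inverse, so a non-trivial convergent sequence $(x_n)$ with $x_n \to x$ produces, after translating by $x$, a one-to-one sequence converging to the identity $\mathbf{0}$; and in $2^{\mathfrak{c}}$ a sequence converges to $\mathbf{0}$ precisely when, for every coordinate $\alpha$, only finitely many terms are nonzero at $\alpha$. Thus the requirement ``no non-trivial convergent sequences'' reduces to the concrete combinatorial demand that $G$ contain no infinite one-to-one sequence whose supports eventually avoid every fixed finite set of coordinates. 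Since the target size is $|G|=\mathfrak{c}$ and $\mathfrak{c}^{\omega}=\mathfrak{c}$, there are only $\mathfrak{c}$ many sequences in $G$ to control, which opens the door to a transfinite recursion of length $\mathfrak{c}$.

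For countable compactness I would exploit that $2^{\mathfrak{c}}$ is compact, so every sequence has a $p$-limit for each $p\in\omega^*$, computed coordinatewise; it therefore suffices to close $G$ under adjoining, for each of its sequences, at least one such $p$-limit. To defeat convergence simultaneously, I would arrange that every one-to-one sequence in $G$ acquires two distinct accumulation points, since a sequence with two accumulation points cannot converge. Both tasks have the same shape---adjoin a suitable $p$-limit---so I would fix a bookkeeping function enumerating all pairs (sequence, task) that can arise, and at stage $\xi<\mathfrak{c}$ add to the group built so far the $p_\xi$-limit of the designated sequence for an aptly chosen $p_\xi\in\omega^*$, then close under the group operation. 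Each stage adds at most $\mathfrak{c}$ elements and there are $\mathfrak{c}$ stages, so the final group has size $\mathfrak{c}$, while the closure under $p$-limits delivers countable compactness and the second-accumulation-point clause rules out every candidate convergent sequence, including those built from later-added elements.

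The hard part will be the choice of the ultrafilters $p_\xi$ and of the fresh coordinates on which the new limits are supported, done coherently across all $\mathfrak{c}$ stages so that countable compactness and non-convergence can coexist. This is exactly the coherence that earlier constructions purchased with extra set-theoretic hypotheses---Martin's Axiom in van Douwen's Lemma~\ref{com2}, \textsf{CH} in \cite{Haj}, or a selective ultrafilter in \cite{selective}---each used to spread accumulation points across reserved coordinates. The breakthrough of Hrušák, van Mill, Ramos-García, and Shelah is a purely \textsf{ZFC} combinatorial construction supplying such spread-out accumulation points with no additional axiom. I would accordingly devote the heart of the proof to building, in \textsf{ZFC}, the relevant scaffold: a system of finite approximations arranged along a tree, together with an independent-like family on $\mathfrak{c}$ and ultrafilters refining it, carrying enough genericity that every sequence can be assigned an accumulation point supported on reserved coordinates, yet enough rigidity that those reserved coordinates force each one-to-one sequence to retain a non-vanishing support and hence a second accumulation point.

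Establishing the existence and the precise independence and genericity properties of this combinatorial object in \textsf{ZFC} is both the technical core of the argument and the step I expect to be by far the most delicate; once it is available, the transfinite recursion above and the verification of countable compactness together with the absence of non-trivial convergent sequences become comparatively routine bookkeeping.
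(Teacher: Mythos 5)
Your proposal contains no proof of the theorem: every step you actually carry out (translating a convergent sequence to one converging to $\mathbf{0}$, observing that two accumulation points preclude convergence, counting $\mathfrak{c}^{\omega}=\mathfrak{c}$ sequences for a length-$\mathfrak{c}$ bookkeeping) is standard and was already present in the consistent constructions of van Douwen, Hajnal--Juh\'asz and Garcia-Ferreira--Tomita--Watson. The one step that constitutes the theorem --- producing, in ZFC alone, the ``scaffold'' of ultrafilters and reserved coordinates that lets the $\mathfrak{c}$-stage recursion cohere --- is exactly the step you name, call ``by far the most delicate,'' and do not attempt. Worse, the architecture you commit to (build $G\leq 2^{\mathfrak{c}}$ by transfinite recursion, at stage $\xi$ adjoining a $p_\xi$-limit of the $\xi$-th designated sequence) is precisely the architecture that historically \emph{required} CH, MA or selective ultrafilters: without such an axiom there is no known way to choose the values of the newly adjoined points on the $\mathfrak{c}$ many coordinates so that earlier commitments (density, the two-accumulation-point demands, and the $p$-limit equations) remain simultaneously satisfiable. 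Asserting that Hru\v{s}\'ak, van Mill, Ramos-Garc\'{\i}a and Shelah found a ZFC object that makes this work is a citation, not an argument.

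The actual proof, reproduced and extended in Sections 2--3 of this paper, inverts your setup: the underlying group is fixed once and for all as $[\mathfrak{c}]^{<\omega}$ with symmetric difference, a family $\{f_\alpha:\alpha\in[\omega,\mathfrak{c})\}$ of one-to-one sequences of linearly independent elements is fixed so that every infinite subset contains some $\rng(f_\alpha)$, and the topology is the initial topology induced by \emph{all} homomorphisms $\bar\phi:[\mathfrak{c}]^{<\omega}\to 2$ obtained by extending $\phi\in\homm$ via the recursion $\bar\phi(\{\alpha\})=p_\alpha\text{-}\lim_{n}\bar\phi(f_\alpha(n))$. Countable compactness is then automatic ($\{\alpha\}$ is a $p_\alpha$-limit of $f_\alpha$), and the entire ZFC content is concentrated in Proposition \ref{primeiro}: a family $\{p_\alpha:\alpha<\mathfrak{c}\}$ of ultrafilters, built from the filter generated by
$\mathcal{B}=\{B\subset\omega:\forall n\, |I_n\setminus B|\leq\sum_{m<n}|I_m|\}$
for a partition $\{I_n\}$ of $\omega$ into finite sets with $|I_n|>n\cdot\sum_{m<n}|I_m|$, such that for any countable subfamily of the $f_\alpha$ one can select pairwise disjoint $U_\alpha\in p_\alpha$ with $\{f_\alpha(n):n\in U_\alpha\}$ jointly linearly independent (the inductive pruning rests on Corollary \ref{antigo}). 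That selection property is what yields enough homomorphisms (Lemma \ref{Homs}) to make the topology Hausdorff and to give each $f_\alpha$ both values infinitely often (Lemma \ref{Lema}), killing all non-trivial convergent sequences. Your proposal would need to supply an analogue of Proposition \ref{primeiro}, or some other ZFC coherence device, before the recursion you describe could even begin.
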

\noindent Due to Lemma \ref{com1}, this result also solves the original Comfort's question.

 In the present article, we prove in ZFC that there exists a Hausdorff countably compact topological Boolean group without non-trivial convergent sequences of size $2^{\mathfrak{c}}$, answering a question posed in \cite{tomvinmat}. We will be dealing with Boolean groups, which are also Boolean vector spaces over the field $2=\{0,1\}$, and thus we can talk about general linear algebra concepts concerning these groups, such as \textit{linearly independent subsets}. More specifically, if $D \subset 2^{\mathfrak{c}}$ is an infinite set, we will consider the Boolean group $[D]^{<\omega}$ with the symmetric difference $\triangle$ as the group operation and $\emptyset$ as the neutral element. Given $p \in \omega^*$, one may define an equivalence relation on $([D]^{<\omega})^{\omega}$ by letting $f \equiv_p g$ iff $\{n \in \omega: f(n)=g(n)\} \in p$. We let $[f]_p$ be the equivalence class determined by $f$ and $([D]^{<\omega})^{\omega}/p$ be $([D]^{<\omega})^{\omega}/\equiv_p$. Notice that this set has a natural vector space structure (over the field $2$). For each $D_0 \in [D]^{<\omega}$, the constant function in $([D]^{<\omega})^{\omega}$ which takes only the value $D_0$ will be denoted by $\vec{D_0}$. If $\alpha < 2^{\mathfrak{c}}$ is an ordinal, then $\vec{\{\alpha\}}$ will be denoted simply by $\vec{\alpha}$.

\section{Auxiliary results}

In this section we present the auxiliary results that we will use in the constructions. We start with a simple fact from linear algebra. 
 \begin{Lema}\label{novo}
    Let $A$, $B$ and $C$ be subsets in a Boolean vector space. Suppose that $A$ is a finite set and that $A \cup C$, $B \cup C$ are linearly independent. Then there exists $B' \subset B$ such that $|B'| \leq |A|$ and $A \cup C \cup (B \setminus B')$ is linearly independent.
    \end{Lema}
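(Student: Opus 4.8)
The plan is to construct the required set greedily and then bound the number of discarded elements by a rank count. Concretely, I would let $B'' \subseteq B$ be a \emph{maximal} subset with the property that $A \cup C \cup B''$ is linearly independent; such a $B''$ exists by Zorn's lemma, since linear independence has finite character. Setting $B' := B \setminus B''$, the set $A \cup C \cup (B \setminus B') = A \cup C \cup B''$ is linearly independent by construction, so the entire content of the lemma reduces to the cardinality estimate $|B'| \le |A|$.

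The first step toward that estimate is a span inclusion coming from maximality. For every $b \in B'$ the set $A \cup C \cup B'' \cup \{b\}$ is linearly dependent while $A \cup C \cup B''$ is independent, which forces $b \in \espger(A \cup C \cup B'')$. Combined with $C \cup B'' \subseteq A \cup C \cup B''$, this yields $B \cup C \subseteq \espger(A \cup C \cup B'')$. I would also record the elementary observation that $B \cap C \subseteq B''$: otherwise some $b \in (B \cap C) \setminus B''$ could be adjoined to $A \cup C \cup B''$ without enlarging the set, contradicting maximality. Consequently $B'$ and $C$ are disjoint, and $B \cup C$ is the union of the disjoint sets $B'$ and $C \cup B''$.

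To finish I would pass to the quotient vector space $V / \espger(C \cup B'')$, writing $\overline{x}$ for the image of $x$. Since $\espger(A \cup C \cup B'') = \espger(C \cup B'') + \espger(A)$, the image of $\espger(A \cup C \cup B'')$ in the quotient has dimension at most $|A|$, and by the previous step $\overline{b}$ lies in this subspace for every $b \in B'$. The crucial point is that the family $\{\overline{b} : b \in B'\}$ is linearly independent in the quotient: a nontrivial relation among these images would produce a nontrivial combination of elements of $B'$ lying in $\espger(C \cup B'')$, i.e.\ a nontrivial dependence inside $B \cup C$ (of which $B'$ and $C \cup B''$ are the two complementary pieces), contradicting the independence of $B \cup C$. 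Hence $\{\overline{b} : b \in B'\}$ is a linearly independent family sitting inside a space of dimension at most $|A|$, so $|B'| \le |A|$, as required.

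The step I expect to be the main obstacle is precisely this final counting: a naive subtraction of cardinalities, $|B \cup C| \le |A \cup C \cup B''|$, does give $|B'| \le |A|$ when everything is finite, but it breaks down when $C$ or $B$ is infinite (since $\kappa + m \le \kappa + n$ does not imply $m \le n$ for infinite $\kappa$). Routing the estimate through the \emph{finite-dimensional} quotient $\espger(A \cup C \cup B'') / \espger(C \cup B'')$ is what makes the bound robust regardless of the sizes of $B$ and $C$, and checking the independence of the images $\overline{b}$ is where the full strength of the hypothesis that $B \cup C$ is linearly independent is used.
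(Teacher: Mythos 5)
Your proof is correct, but it takes a genuinely different route from the paper's. The paper argues by induction on $|A|$: in the base case $|A|=1$ it exhibits an explicit dependence $a = Cl(C)_1 \triangle Cl(B)_1$, deletes one element of $B$ occurring in $Cl(B)_1$, and shows that a second such representation would contradict the independence of $B\cup C$; the inductive step then peels off one element of $A$ at a time, discarding at most one element of $B$ per element of $A$. You instead take a maximal $B''\subseteq B$ with $A\cup C\cup B''$ independent and bound $|B\setminus B''|$ by a dimension count in the quotient $V/\espger(C\cup B'')$ — essentially a Steinitz exchange argument relative to $C$. Your key verifications are all sound: maximality forces each $b\in B'$ into $\espger(A\cup C\cup B'')$ (and forces $B\cap C\subseteq B''$, so $B'$ and $C\cup B''$ are disjoint), the images $\overline{b}$ for $b\in B'$ are independent because a relation among them would be a nontrivial dependence inside $B\cup C$, and they live in the image of $\espger(A)$, which has dimension at most $|A|$. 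Your approach buys generality — nothing uses characteristic $2$, so the lemma holds over any field — and correctly sidesteps the cardinality trap you identify when $B$ or $C$ is infinite. The paper's induction buys elementarity: it avoids Zorn's lemma and quotient spaces entirely, and its bookkeeping (one discarded element of $B$ per element of $A$) is the form in which the lemma is actually invoked later, e.g.\ in the estimate $|R_0|\le r+l+2$ in Lemma \ref{Homs}.
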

    \begin{proof}
    We prove the result by induction on $|A|$. First, suppose that $|A|=1$, that is, $A$ has a single element $a \neq 0$. If $A \cup C \cup B$ is linearly independent, we simply consider $B'= \emptyset$. Otherwise, there is a non-trivial linear combination of elements in $A \cup C \cup B$ that equals zero. Note that $a$ and some element in $B$ must appear in this linear combination, since $A \cup C$ and $B \cup C$ are linearly independent. Thus, we have
    \[ a = Cl(C)_1 \triangle Cl(B)_1, \]
    for some $Cl(B)_1\neq 0$ and $Cl(C)_1$ linear combinations of elements in $B$ and $C$, respectively. Choose an element $b \in B$ which appear in $Cl(B)_1$. We claim that $A \cup C \cup (B \setminus \{b\})$ is linearly independent. Indeed, otherwise we would have
    \[
    a  =  Cl(C)_2 \triangle Cl(B)_2,
    \]
    for some $Cl(B)_2\neq 0$ and $Cl(C)_2$ linear combinations of elements in $B \setminus \{b\}$ and $C$, respectively. But this cannot happen, since $B \cup C$ is linearly independent and $Cl(B)_1 \neq Cl(B)_2$. Hence, we have proved the result if $A$ is a set of size $1$.
    
    Suppose that the Lemma holds for sets of size $n \in \omega$, and that $A$ has a size $n+1$. In this case, letting $a \in A$, we may apply the hypothesis to the sets $A \setminus \{a\}$, $B$ and $C$. Thus, we get $B_0 \subset B$ so that $|B_0| \leq n$ and that $(A \setminus \{a\}) \cup C \cup (B \setminus B_0)$ is linearly independent. Now, we apply the result for sets of size $1$ to the sets $\{a\}$, $B \setminus B_0$ and $(A \setminus \{a\}) \cup C$, and we are done.
 \end{proof}
    
    The next corollary appears in \cite{Michael}.
    \begin{Corol}\label{antigo}
    Let $A$ and $B$ be linearly independent subsets in a Boolean vector space with $A$ a finite set. Then there is $B' \subset B$ such that $|B'| \leq |A|$ and $A \cup (B \setminus B')$ is linearly independent.
    \end{Corol}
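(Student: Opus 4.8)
The plan is to obtain the corollary as an immediate specialization of Lemma \ref{novo}, by taking the auxiliary set $C$ to be empty. Since $C$ plays the role of a fixed linearly independent ``background'' set that is never removed, setting $C=\emptyset$ should collapse the hypotheses and the conclusion of the lemma precisely onto those of the corollary.

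Concretely, I would first check that the hypotheses match. With $C=\emptyset$, the lemma requires that $A\cup C=A$ and $B\cup C=B$ be linearly independent, and that $A$ be finite. This is exactly what the corollary assumes. So the lemma applies verbatim.

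Next I would read off the conclusion. The lemma then produces a set $B'\subset B$ with $|B'|\le|A|$ such that $A\cup C\cup(B\setminus B')=A\cup(B\setminus B')$ is linearly independent. This is word-for-word the statement of the corollary, so no further argument is needed.

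I expect no genuine obstacle here: the only thing to verify is that $C=\emptyset$ is an admissible choice in Lemma \ref{novo} (it is, since the empty set is vacuously linearly independent and imposes no constraint), after which the result follows directly. The whole proof is therefore a single invocation of the lemma with $C=\emptyset$.
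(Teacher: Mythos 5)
Your proposal is correct and coincides exactly with the paper's own proof, which also just invokes Lemma \ref{novo} with $C=\emptyset$. Nothing further is needed.
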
 
    \begin{proof}
    Use the previous lemma for $C= \emptyset$.
    \end{proof}

The main new idea that appears in \cite{Michael} when proving Theorem \ref{mich} is the use of a clever filter to generate a suitable family of ultrafilters $\{p_{\alpha}:\alpha < \mathfrak{c}\} \subset \omega^*$, given by the next result.

\begin{Prop}[\cite{Michael}]\label{primeiro}
There is a family $\{p_{\alpha}: \alpha < \mathfrak{c}\} \subset \omega^*$ such that, for every $D \in [\mathfrak{c}]^{\omega}$ and $\{f_{\alpha}: \alpha \in D\}$ such that each $f_{\alpha}$ is an one-to-one enumeration of
linearly independent elements of $[\mathfrak{c}]^{< \omega}$, there is a sequence $<U_{\alpha}: \alpha \in D>$ that satisfies
\begin{enumerate}[(i)]
    \item $\{U_{\alpha}: \alpha \in D\}$ is a family of pairwise disjoint subsets of $\omega$;
    \item $U_{\alpha} \in p_{\alpha}$ for every $\alpha \in D$;
    \item $\{f_{\alpha}(n): \alpha \in D \text{ and } n \in U_{\alpha}\}$ is a linearly independent subset of $[\mathfrak{c}]^{< \omega}$.
\end{enumerate}
\end{Prop}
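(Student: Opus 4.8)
The plan is to build the family $\{p_\alpha : \alpha < \mathfrak{c}\}$ by a transfinite recursion of length $\mathfrak{c}$, fixing each $p_\alpha$ only at the end as a free ultrafilter extending an approximating filter $\mathcal{F}_\alpha$ grown along the way. The first observation making this feasible in ZFC is a reduction to countable dimension: for any countable $D$ and family $\{f_\alpha : \alpha \in D\}$, the total support $S = \bigcup\{ f_\alpha(n) : \alpha \in D,\ n \in \omega\}$ is a countable subset of $\mathfrak{c}$, so every vector $f_\alpha(n)$ already lives in $[S]^{<\omega} \cong [\omega]^{<\omega}$. Consequently, up to this identification, the number of distinct instances $(D, \langle f_\alpha : \alpha \in D\rangle)$ to be served is $\mathfrak{c}$, and so is the number of ``dependency patterns'' (finite-support linear relations, and the relabellings expressing one enumeration through combinations of others). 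This is exactly the counting that lets a recursion of length $\mathfrak{c}$ anticipate all of them.

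Next I would secure conditions (i) and (ii) by demanding that $\{p_\alpha : \alpha < \mathfrak{c}\}$ be a \emph{discrete} family, so that any countable subfamily $\{p_\alpha : \alpha \in D\}$ can be separated by pairwise disjoint sets $W_\alpha \in p_\alpha$. Choosing the $p_\alpha$ among Kunen's $2^{\mathfrak{c}}$ weak $P$-points \cite{kunen} gives this at once: no $p_\alpha$ accumulates to the countable set of the others, so for each $\alpha \in D$ there is $A_\alpha \in p_\alpha$ with $A_\alpha \notin p_\beta$ for $\beta \in D \setminus \{\alpha\}$, and setting $W_{\alpha_k} = A_{\alpha_k} \setminus \bigcup_{i<k} A_{\alpha_i}$ along any enumeration of $D$ yields pairwise disjoint sets still in the respective ultrafilters. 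Any further shrinking of $W_\alpha$ to a $U_\alpha \in p_\alpha$ preserves (i) and (ii), so the whole difficulty is concentrated in arranging that such a shrinking can simultaneously deliver (iii).

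For (iii), given the separated $W_\alpha$, I would enumerate all vectors $f_\alpha(n)$ ($\alpha \in D$, $n \in W_\alpha$) and run a greedy selection, keeping a vector precisely when it is linearly independent from those already kept, and then set $U_\alpha = \{ n \in W_\alpha : f_\alpha(n) \text{ was kept}\}$. The kept set is linearly independent by construction and the $U_\alpha$ remain pairwise disjoint, so everything reduces to the single clause that \emph{each $U_\alpha$ still belongs to $p_\alpha$}, i.e.\ the discarded indices form a $p_\alpha$-small set. Because every vector has finite support, Corollary \ref{antigo}, together with its generalization Lemma \ref{novo} (used to absorb finitely many vectors into an already independent set with a controlled number of deletions), guarantees that locally only boundedly many indices must be dropped to accommodate any fixed finite batch of competing vectors; the obstruction is purely global, and it is here that the recursion must have done its work.

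The main obstacle, then, is the construction of the $\mathcal{F}_\alpha$ themselves so that this greedy discard is always $p_\alpha$-small. An adversarial instance in which one enumeration is a relabelling $f_\beta = f_\alpha \circ \sigma$ of another shows this fails for generic weak $P$-points exactly when $p_\alpha = \sigma(p_\beta)$, since then no disjoint $U_\alpha \in p_\alpha$, $U_\beta \in p_\beta$ can avoid the coincidences $f_\alpha(n) = f_\beta(m)$; hence the family must be built to defeat every such push-forward coincidence and, more generally, every finite-support linear relation between distinct rows. I would therefore enumerate these $\mathfrak{c}$ potential relations as $\langle R_\xi : \xi < \mathfrak{c}\rangle$ and, at stage $\xi$, extend the finitely many filters $\mathcal{F}_\alpha$ touched by $R_\xi$ by reserving a large set that prevents $R_\xi$ from being realized cofinally, all while keeping each $\mathcal{F}_\alpha$ a proper free filter and the family discrete. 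Verifying that these simultaneous reservations never destroy the finite intersection property at any stage — the place where the finiteness of supports is indispensable, via Lemma \ref{novo} — and only then extending each $\mathcal{F}_\alpha$ to an ultrafilter $p_\alpha$, is the technical core; this is the ``clever filter'' referred to before the statement, and I expect the preservation of properness under the reservations to be the most delicate point.
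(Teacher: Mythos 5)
There is a genuine gap, and it sits exactly where you park it: the construction of the filters $\mathcal{F}_\alpha$ and the verification that $\mathfrak{c}$-many ``reservations'' preserve the finite intersection property is not an expected-to-be-delicate detail but the entire content of the proposition, and your proposal gives no mechanism for it. A single $p_\alpha$ must serve $\mathfrak{c}$-many instances $(D,\langle f_\beta\rangle)$ with $\alpha\in D$, each demanding its own $U_\alpha\in p_\alpha$; committing a specific $U_\alpha$ to $\mathcal{F}_\alpha$ at each stage means $\mathcal{F}_\alpha$ accumulates $\mathfrak{c}$-many generators whose pairwise compatibility is exactly what has to be proved, and nothing in the proposal rules out two instances forcing almost disjoint choices. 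Relatedly, your greedy selection has no quantitative control: Corollary \ref{antigo} only says that absorbing a finite set $A$ costs at most $|A|$ deletions, and against an infinite stream of competing vectors this bound alone cannot certify that the discarded indices form a $p_\alpha$-small set (indeed, for a bad enumeration order the whole of some $f_\alpha$ can be discarded). There is also an unresolved tension in asking the $p_\alpha$ simultaneously to be weak $P$-points (for clauses (i)--(ii)) and to extend recursively built filters (for clause (iii)): an arbitrary free filter need not extend to a weak $P$-point, so these two requirements cannot just be imposed independently.

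The paper's proof avoids all of this by \emph{not} anticipating instances at all. One fixes in advance a partition $\{I_n:n\in\omega\}$ of $\omega$ into finite blocks with $|I_n|>n\cdot\sum_{m<n}|I_m|$ and the single filter $\mathcal{F}$ generated by $\mathcal{B}=\{B:\forall n\,|I_n\setminus B|\le\sum_{m<n}|I_m|\}$; the point is that $\mathcal{B}$ is closed under deleting up to $\sum_{m<n}|I_m|$ points from each block $I_n$, which is precisely the per-block cost that Corollary \ref{antigo} charges when the blocks are processed one at a time. Each $p_\alpha$ is then any ultrafilter containing $\{f\cap\bigcup_{n\in A}I_n: A\in q_\alpha,\ f\in\mathcal{F}\}$, where the $q_\alpha$ are weak $P$-points (or an almost disjoint family in \cite{Michael}) living on the \emph{block indices}; disjointness of the $U_\alpha$ is inherited from disjoint $C_\alpha\in q_\alpha$ upstairs, and clause (iii) is obtained post hoc for any given instance by the block-by-block deletion argument, which lands in $\mathcal{B}$ by the arithmetic of the block sizes. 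So the ``clever filter'' is a single fixed object with a uniform robustness property, not a family grown by transfinite recursion; if you want to salvage your outline, the missing idea is precisely this quantitative block structure.
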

\noindent Using fundamentally the same idea, with just a slight modification, we construct a similar suitable family of ultrafilters $\{p_{\alpha}:\alpha < 2^\mathfrak{c}\} \subset \omega^*$, which permits, in the same way that was done in \cite{Michael}, the construction of a group of size $2^{\mathfrak{c}}$ satisfying Theorem \ref{mich}. For the sake of completeness, we shall not omit any of the arguments. 

\begin{Prop}\label{suitable}
There is a family $\{p_{\alpha}: \alpha < 2^{\mathfrak{c}}\} \subset \omega^*$ such that, for every $D \in [2^{\mathfrak{c}}]^{\omega}$ and $\{f_{\alpha}: \alpha \in D\}$ such that each $f_{\alpha}$ is an one-to-one enumeration of
linearly independent elements of $[2^{\mathfrak{c}}]^{< \omega}$, there is a sequence $<U_{\alpha}: \alpha \in D>$ that satisfies
\begin{enumerate}[I)]
    \item $\{U_{\alpha}: \alpha \in D\}$ is a family of pairwise disjoint subsets of $\omega$;
    \item $U_{\alpha} \in p_{\alpha}$ for every $\alpha \in D$;
    \item $\{f_{\alpha}(n): \alpha \in D \text{ and } n \in U_{\alpha}\}$ is a linearly independent subset of $[2^{\mathfrak{c}}]^{< \omega}$.
\end{enumerate}
\end{Prop}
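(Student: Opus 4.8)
The plan is to run, \emph{mutatis mutandis}, the construction of \cite{Michael} that produces the family of Proposition \ref{primeiro}, replacing the index set $\mathfrak{c}$ by $2^{\mathfrak{c}}$ and checking that the only cardinal estimate that is used survives the substitution. That estimate is the number of ``tasks'' to be met, a task being a pair consisting of a countable $D\in[2^{\mathfrak{c}}]^{\omega}$ together with a system $\langle f_{\alpha}:\alpha\in D\rangle$ of one-to-one enumerations of linearly independent subsets of $[2^{\mathfrak{c}}]^{<\omega}$. Since $|[2^{\mathfrak{c}}]^{\omega}|=(2^{\mathfrak{c}})^{\omega}=2^{\mathfrak{c}\cdot\omega}=2^{\mathfrak{c}}$ and likewise $|([2^{\mathfrak{c}}]^{<\omega})^{\omega}|=2^{\mathfrak{c}}$, there are exactly $2^{\mathfrak{c}}$ tasks; this is the sole role played by the size of the index set, and it is the ``slight modification'' of the estimate $\mathfrak{c}^{\omega}=\mathfrak{c}$ used in \cite{Michael}. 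So I would first fix an enumeration $\{(D^{\xi},\langle f^{\xi}_{\alpha}:\alpha\in D^{\xi}\rangle):\xi<2^{\mathfrak{c}}\}$ of all tasks.

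I would then build $\{p_{\alpha}:\alpha<2^{\mathfrak{c}}\}$ by a recursion of length $2^{\mathfrak{c}}$, approximating each $p_{\alpha}$ by an increasing chain of filters $p^{\xi}_{\alpha}$ on $\omega$, each generated by fewer than $2^{\mathfrak{c}}$ sets, with the usual bookkeeping that decides membership of every $A\subseteq\omega$ so that each union $p_{\alpha}=\bigcup_{\xi}p^{\xi}_{\alpha}$ is an ultrafilter (and the $p_{\alpha}$ are pairwise distinct, as they must be for disjoint sets to lie in distinct ultrafilters). At stage $\xi$ I would meet the task $(D^{\xi},\langle f^{\xi}_{\alpha}\rangle)$: enumerating $D^{\xi}=\{\alpha_k:k\in\omega\}$, I would construct the required sets $U_{\alpha_k}$ by a fusion over $k$, deciding at step $k$ only finitely many indices of each $U_{\alpha_j}$ with $j\le k$. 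Disjointness (clause I)) is imposed directly at each finite step, while linear independence (clause III)) is maintained by the linear-algebra input: since linear independence is a property of finite subsets, whenever finitely many newly selected vectors threaten a dependence with the finitely many already committed ones, Corollary \ref{antigo} --- in its iterated form, Lemma \ref{novo} --- lets me delete only boundedly many indices to restore independence, few enough that the surviving pieces remain large. I would then add each resulting $U_{\alpha_k}$ to $p^{\xi+1}_{\alpha_k}$.

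The crux, and the step I expect to be the main obstacle, is to guarantee that these commitments never collapse a filter: the finitely many disjointness and independence demands generated at stage $\xi$ must be simultaneously compatible with one another and, crucially, with every set already promised to the relevant $p_{\alpha}$ at earlier stages, so that $p^{\xi}_{\alpha}\cup\{U_{\alpha}\}$ retains the finite intersection property and each clause I), II), III) passes to the union. This is exactly what the ``clever filter'' of \cite{Michael} is designed to secure: it couples the disjointness demand of I) and II) to the independence demand of III) in such a way that the deletions forced by Corollary \ref{antigo} can always be performed while keeping $U_{\alpha}$ large in $p^{\xi}_{\alpha}$, so that it still meets every set promised earlier. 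Since every argument in \cite{Michael} establishing this compatibility is local and appeals to the size of the index set only through the count of tasks already absorbed above, it transfers verbatim; granting it, clauses I), II) and III) for an arbitrary task $(D,\langle f_{\alpha}\rangle)$ are read off from the stage $\xi$ at which that task appears in the enumeration.
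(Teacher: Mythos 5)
There is a genuine gap, and it sits exactly where you locate "the crux." A transfinite recursion of length $2^{\mathfrak{c}}$ that builds each $p_{\alpha}$ as an increasing union of filters on $\omega$ cannot work as described, because $|\mathcal{P}(\omega)|=\mathfrak{c}<2^{\mathfrak{c}}$. The safeguard "generated by fewer than $2^{\mathfrak{c}}$ sets" is vacuous for filters on $\omega$, and the bookkeeping that decides membership of every $A\subseteq\omega$ is exhausted after at most $\mathfrak{c}$ stages, at which point each $p_{\alpha}$ is a fully determined ultrafilter. Yet a fixed $\alpha<2^{\mathfrak{c}}$ belongs to $2^{\mathfrak{c}}$ many of the countable sets $D^{\xi}$, so at cofinally many stages you can no longer "add $U_{\alpha}$ to $p^{\xi+1}_{\alpha}$": you must show that a suitable $U_{\alpha}$ already lies in the finished $p_{\alpha}$, which is precisely the nontrivial content the recursion was meant to supply. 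Relatedly, your cardinal audit of \cite{Michael} is off target: the construction there is not a task-by-task recursion with bookkeeping, and the size of the index set enters not through the number of tasks but through the device used to make countably many of the $p_{\alpha}$ simultaneously "separable" by pairwise disjoint sets --- namely an almost disjoint family on $\omega$, which has size at most $\mathfrak{c}$ and therefore cannot be scaled up to $2^{\mathfrak{c}}$. That obstruction is the whole point of the proposition, and your proposal never addresses it.

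The paper's actual proof has no recursion over tasks. It fixes a partition $\{I_n:n\in\omega\}$ of $\omega$ into finite blocks with $|I_n|>n\cdot\sum_{m<n}|I_m|$ and the filter $\mathcal{F}$ generated by $\mathcal{B}=\{B\subseteq\omega:\forall n\,|I_n\setminus B|\leq\sum_{m<n}|I_m|\}$; it then takes Kunen's $2^{\mathfrak{c}}$ weak $P$-points $\{q_{\xi}:\xi<2^{\mathfrak{c}}\}$ and lets $p_{\xi}$ be any ultrafilter containing $\bigl\{f\cap\bigcup_{n\in A}I_n: A\in q_{\xi},\ f\in\mathcal{F}\bigr\}$. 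Given a task, the weak $P$-point property yields pairwise disjoint $C_{\alpha_n}\in q_{\alpha_n}$ (no countable set of weak $P$-points accumulates at another), and Corollary \ref{antigo} is applied block by block to the $I_k$ to delete small sets $R_k$ with $|R_k|\leq\sum_{m<k}|I_m|$, so that $B=\bigcup_k(I_k\setminus R_k)\in\mathcal{B}$ and $U_{\alpha_n}=B\cap\bigcup_{l\in B_n}I_l\in p_{\alpha_n}$ while clause III) holds. If you want to salvage your write-up, the repair is to abandon the recursion and substitute weak $P$-points for the almost disjoint family; that substitution, not the count of tasks, is the "slight modification" of \cite{Michael} that the proposition requires.
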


\begin{proof}
Fix $\{I_n: n \in \omega\}$ a partition of $\omega$ into finite sets such that 
\[
|I_n| > n \cdot \sum_{m<n}|I_m|,
\]
and let
\[
\mathcal{B} = \{B \subset \omega: \forall n \in \omega, |I_n \setminus B| \leq \sum_{m<n}|I_m|\}.
\]
Note that the intersection of every finite subfamily of $\mathcal{B}$ is infinite, thus we may consider the filter $\mathcal{F}$ it generates, which is free. If $A$ is an infinite subset of $\omega$, notice that we have, for every $f \in \mathcal{F}$, $|f \cap \bigcup_{n \in A} I_n| = \omega$.  

\begin{Rem}
In \cite{Michael}, it was fixed at this point an almost disjoint family $(A_{\alpha})_{\alpha < \mathfrak{c}}$ of size $\mathfrak{c}$ of infinite subsets of $\omega$, and each ultrafilter $p_{\alpha}$ was chosen extending $\restr{\mathcal{F}}{\bigcup_{n \in A_{\alpha}} I_n}$. Here, we construct them by using \textit{weak $P$-points} instead. Here lies the difference between the two constructions.
\end{Rem}

Let $\{q_{\xi}: \xi < 2^{\mathfrak{c}}\}$ be the set of \textit{weak $P$-points}\footnotemark. \footnotetext{Recall that Kunen showed that there are $2^{\mathfrak{c}}$ of them in ZFC \cite{kunen}.}
For each $\xi < 2^{\mathfrak{c}}$, we fix a free ultrafilter $p_{\xi}$ containing the set
\[
\left\{f \cap \bigcup_{n \in A} I_n : A \in q_{\xi} \text{, } f \in \mathcal{F}  \right\}.
\]
We shall prove that the family of free ultrafilters $(p_{\xi})_{\xi < 2^{\mathfrak{c}}}$ satisfies the Proposition. For this, we fix a set $D=\{\alpha_n: n \in \omega\} \in [2^{\mathfrak{c}}]^{\omega}$ and a family  $\{f_{\alpha}: \alpha \in D\}$ of one-to-one
sequences of linearly independent elements of $[2^{\mathfrak{c}}]^{< \omega}$. For every $\alpha_n, n \in \omega$, we choose $C_{\alpha_n} \in q_{\alpha_n}$ in a way that $(C_{\alpha_n})_{n \in \omega}$ is a family of pairwise disjoint sets, which can be done, since $(q_{\alpha_n})_{n \in \omega}$ is a sequence of weak $P-$points. Now, let $\{B_n: n \in \omega\}$ be a partition of  $\omega$ such that $B_n =^{*} C_{\alpha_n}$, for every $n \in \omega$ (we could use the sets $C_{\alpha_n}$ directly, but this choice will simplify a bit).

Suppose that, for each $k \in \omega$, $k \in B_{n_k}$. We shall construct a sequence of sets $(R_k)_{k \in \omega}$ satisfying that, for each $k \in \omega$,
\begin{enumerate}[1)]
\item $R_k \subset I_k$;
\item $|R_k| \leq \sum_{m<k}|I_m|$;
\item $f_{\alpha_{n_0}}[I_0 \setminus R_0] \cup f_{\alpha_{n_1}}[I_1 \setminus R_1] \cup... \cup f_{\alpha_{n_k}}[I_k \setminus R_k] \text{ is linearly independent}$.
\end{enumerate}
Let $R_0= \emptyset$, and, given $N > 0$, assume that we have already constructed sets $(R_k)_{k < N}$ satisfying the conditions above for each $k<N$. Since both $f_{\alpha_N}[I_N]$ and $f_{\alpha_{n_0}}[I_0 ] \cup f_{\alpha_{n_1}}[I_1 \setminus R_1] \cup... \cup f_{\alpha_{n_{N-1}}}[I_{N-1} \setminus R_{N-1}] \text{ are linearly independent}$, Corollary \ref{antigo} implies that there exists $R_N \subset I_N$ such that
\[
|R_N| \leq \sum_{m<N}|I_m|
\]
and
\[
f_{\alpha_{n_0}}[I_0] \cup...\cup f_{\alpha_{N_n}}[I_N \setminus R_N] \text{ is linearly independent.}
\]
Therefore, there exists a family $(R_k)_{k \in \omega}$ satisfying the three conditions above, for every $k \in \omega$. Hence, the set
\[
B \doteq \bigcup_{l \in \omega} (I_l \setminus R_l)
\]
satisfy the following:
\begin{enumerate}[i)]
    \item $I_0 \subset B$;
    \item for every $l \in \omega$, $
    |I_l \setminus B| =|R_l| \leq \sum_{m < l}|I_m|;
    $
    \item $\{f_{\alpha_n}(m): n \in \omega \text{, } m \in B \cap I_l \text{, } l \in B_n\}$ is linearly independent.
    \end{enumerate}
It follows from ii) that $B \in \mathcal{B}$, and defining, for each $n \in \omega$,
 \[
 U_n \doteq B \cap \bigcup_{l \in B_n} I_l,
 \]
it is clear that $(U_n)_{n \in \omega}$ is a family of pairwise disjoint sets. Furthermore, it follows from the definition of $p_{\alpha_n}$ and from the fact that $B_n \in q_{\alpha_n}$, that $U_n \in p_{\alpha_n}$. Finally, by construction,
\[
\{f_{\alpha_n}(m): n \in \omega \text{, } m \in U_n\}
\]
is linearly independent.
\end{proof}

For the next result, we fix the family of ultrafilters $\{p_{\alpha}:\alpha<2^{\mathfrak{c}}\}$ constructed in the previous proposition.

\begin{Lema}\label{Homs}
Let $\{f_{\alpha}: \alpha \in I\}$, with $I \subset 2^{\mathfrak{c}}$, be a family of one-to-one enumerations of linearly independent elements of $[2^{\mathfrak{c}}]^{<\omega}$, and $D \in [2^{\mathfrak{c}}]^{\omega}$ be such that, for every $\alpha \in D \cap I$, $\bigcup_{n \in \omega}f_{\alpha}(n) \subset D$. Consider also $D_0 \in [D]^{<\omega}$ and $F:D_0 \to 2$ a function. Then there exists a homomorphism $\phi: [D]^{<\omega} \to 2$ so that, for every $\alpha \in D \cap I$,
\[
\phi(\{\alpha\})=p_{\alpha}-\lim_{n \in \omega}\phi(f_{\alpha}(n)),
\]and, for each $d \in D_0$, $\phi(\{d\})=F(d)$. 
\end{Lema}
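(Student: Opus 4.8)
The plan is to turn the $p_\alpha$-limit requirements into purely linear conditions on $\phi$ and then realize $\phi$ as a linear functional that vanishes on a carefully chosen linearly independent set. View $[D]^{<\omega}$ as an $\mathbb{F}_2$-vector space with basis $\{\{d\}:d\in D\}$, so that a homomorphism $\phi:[D]^{<\omega}\to 2$ is exactly a linear functional, freely specifiable on that basis. Since $2$ is discrete, for $\alpha\in D\cap I$ the equation $\phi(\{\alpha\})=p_\alpha\text{-}\lim_n\phi(f_\alpha(n))$ means precisely that $\{n:\phi(f_\alpha(n))=\phi(\{\alpha\})\}\in p_\alpha$. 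Setting $g_\alpha(n):=\{\alpha\}\triangle f_\alpha(n)$ (which lies in $[D]^{<\omega}$, because $\alpha\in D$ and $\bigcup_n f_\alpha(n)\subseteq D$), linearity gives $f_\alpha(n)=\{\alpha\}\triangle g_\alpha(n)$, hence $\phi(f_\alpha(n))=\phi(\{\alpha\})\iff\phi(g_\alpha(n))=0$. Thus it suffices to find, for each $\alpha\in D\cap I$, a set $U_\alpha\in p_\alpha$ and a homomorphism $\phi$ with $\phi(g_\alpha(n))=0$ for all $n\in U_\alpha$ and $\phi(\{d\})=F(d)$ for all $d\in D_0$: then $U_\alpha\subseteq\{n:\phi(f_\alpha(n))=\phi(\{\alpha\})\}$ yields the required limit.

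To produce such a $\phi$ I would arrange the vectors $g_\alpha(n)$ to be linearly independent and then assign values freely. The key idea is to feed the $g_\alpha$, rather than the $f_\alpha$, into Proposition \ref{suitable}. First, for each $\alpha\in D\cap I$ the sequence $(g_\alpha(n))_n$ is one-to-one, and its range becomes linearly independent after deleting at most one index: since $(f_\alpha(n))_n$ is independent, a relation $\triangle_{n\in S}g_\alpha(n)=0$ forces $\triangle_{n\in S}f_\alpha(n)=\{\alpha\}$ with $|S|$ odd, and independence makes such an $S$ unique, so removing one of its indices destroys every nontrivial relation. This deletion (finite, and followed by a harmless reindexing that does not affect $p_\alpha$) makes each $g_\alpha$ a one-to-one enumeration of linearly independent elements of $[2^{\mathfrak{c}}]^{<\omega}$. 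Applying Proposition \ref{suitable} to the countable family $\{g_\alpha:\alpha\in D\cap I\}$ (a finite $D\cap I$ being handled directly) yields pairwise disjoint $U_\alpha\in p_\alpha$ with $S:=\{g_\alpha(n):\alpha\in D\cap I,\ n\in U_\alpha\}$ linearly independent.

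It remains to separate $S$, in the linear sense, from the prescribed singletons. Let $W_0:=\espger\{\{d\}:d\in D_0\}$, a space of finite dimension $k=|D_0|$, and let $q$ be the quotient map modulo $W_0$. Because $S$ is independent, the relations witnessing $\espger(S)\cap W_0$ form a space isomorphic to $\espger(S)\cap W_0$, of dimension at most $k$; choosing a basis of it and letting $R\subseteq S$ be the (finite) union of the supports of these basis relations, one checks that deleting $R$ leaves $\espger(S\setminus R)\cap W_0=\{0\}$, since any relation supported off $R$ would lie in the span of the chosen basis relations, hence be trivial. Put $U_\alpha':=U_\alpha\setminus\{n:g_\alpha(n)\in R\}$; as $R$ is finite and each $g_\alpha$ is one-to-one, this removes finitely many indices from each $U_\alpha$ and touches only finitely many $\alpha$, so $U_\alpha'\in p_\alpha$. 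Combining the independence from Proposition \ref{suitable}, the just-obtained triviality of the intersection with $W_0$, and the obvious independence of distinct singletons, the set $\Sigma:=\{g_\alpha(n):\alpha\in D\cap I,\ n\in U_\alpha'\}\cup\{\{d\}:d\in D_0\}$ is linearly independent.

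Finally I would define $\phi$ on $\Sigma$ by $\phi(g_\alpha(n))=0$ and $\phi(\{d\})=F(d)$, extend $\Sigma$ to a basis of $[D]^{<\omega}$, and set $\phi=0$ on the remaining basis vectors. Then $\phi$ is a homomorphism with $\phi(\{d\})=F(d)$ for $d\in D_0$, and for each $\alpha\in D\cap I$ and each $n\in U_\alpha'$ we get $\phi(f_\alpha(n))=\phi(\{\alpha\})+\phi(g_\alpha(n))=\phi(\{\alpha\})$, so $U_\alpha'\in p_\alpha$ forces $p_\alpha\text{-}\lim_n\phi(f_\alpha(n))=\phi(\{\alpha\})$. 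The main obstacle, and the only place the hypotheses genuinely bite, is securing the global linear independence of the selected $g_\alpha(n)$ while keeping each selection inside its ultrafilter $p_\alpha$; this is exactly what Proposition \ref{suitable} provides once it is applied to $\{g_\alpha\}$ instead of $\{f_\alpha\}$, and the two finite prunings (to repair per-$\alpha$ independence and to peel away the finite-dimensional interaction with $D_0$) are legitimate precisely because discarding finitely many indices from each $U_\alpha$ never affects membership in $p_\alpha$.
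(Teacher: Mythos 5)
Your reduction is genuinely different from the paper's argument and, modulo one repair, it works. The paper applies Proposition \ref{suitable} to the $f_\alpha$ themselves and then builds $\phi$ as an increasing union of partial homomorphisms $\phi_0\subset\phi_1\subset\cdots$, invoking Lemma \ref{novo} at each stage to put the new block $\{f_{\alpha_{r+N+1}}(n):n\in U_{\alpha_{r+N+1}}\}$ (minus a finite set) in direct sum with everything built so far, imposing the values $F(d)$ and the $p_\alpha$-limit conditions step by step. You instead homogenize the limit conditions by passing to $g_\alpha(n)=\{\alpha\}\triangle f_\alpha(n)$, so the whole lemma collapses to a single linear-algebra task: select $U_\alpha\in p_\alpha$ with $\{g_\alpha(n):\alpha\in D\cap I,\ n\in U_\alpha\}$ globally independent and in direct sum with $\espger\{\{d\}:d\in D_0\}$. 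This is cleaner, avoids any recursion, and handles the case $\alpha\in D_0\cap I$ without the paper's case split (the condition $\phi(g_\alpha(n))=0$ is compatible with any prescribed value of $\phi(\{\alpha\})$). Your uniqueness argument for the single bad relation in each $g_\alpha$, your quotient argument for pruning $S$ against $W_0$ (a legitimate substitute for Lemma \ref{novo}), and the final extension to a basis are all correct.

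The one step that fails as written is the parenthetical ``harmless reindexing that does not affect $p_\alpha$.'' After deleting the (at most one) index $n_0$ witnessing $\triangle_{n\in S}f_\alpha(n)=\{\alpha\}$, the sequence $g_\alpha$ is defined only on $\omega\setminus\{n_0\}$, while Proposition \ref{suitable} requires enumerations indexed by $\omega$. Composing with the order isomorphism $h\colon\omega\to\omega\setminus\{n_0\}$ does change ultrafilter membership: $h$ shifts every index $\geq n_0$ by one, and nothing guarantees $h[U]\in p_\alpha$ for $U\in p_\alpha$ (if $p_\alpha$ contains the evens and $U$ is a set of evens above $n_0$, then $h[U]$ is a set of odds). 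Consequently the set of \emph{original} indices you ultimately select need not belong to $p_\alpha$, and the concluding computation of $p_\alpha\text{-}\lim_n\phi(f_\alpha(n))$ breaks down. The repair is easy and should be stated: do not reindex; instead redefine $g_\alpha$ at the single bad index $n_0$ to be any element of $[2^{\mathfrak{c}}]^{<\omega}$ outside the countably generated subspace $\espger\{g_\alpha(n):n\neq n_0\}$, apply Proposition \ref{suitable} to these everywhere-defined one-to-one independent enumerations, and then delete $n_0$ from the resulting $U_\alpha$ --- a one-element removal that preserves membership in $p_\alpha$ and discards the dummy value before it can enter $\Sigma$. With that change your proof is complete.
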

\begin{proof}
Enumerate $(D \cap I)$ as $\{\alpha_n: n \in \omega\}$ so that  $\{\alpha_0,...,\alpha_r\} = \displaystyle D_0 \cap I$, and let $D_0 \setminus I = \{d_0,...,d_l\}$. According to Proposition \ref{suitable}, there is a sequence $<U_{\alpha}: \alpha \in D \cap I>$ that satisfies
\begin{enumerate}[I)]

    \item $\{U_{\alpha}: \alpha \in D \cap I\}$ is a family of pairwise disjoint subsets of $\omega$;
    \item $U_{\alpha} \in p_{\alpha}$ for every $\alpha \in D \cap I$;
    \item $\{f_{\alpha}(n): \alpha \in D \cap I \text{ and } n \in U_{\alpha}\}$ is a linearly independent subset of $[2^{\mathfrak{c}}]^{< \omega}$.
\end{enumerate}

Letting 
\[
E_0 \doteq \{\{d_0\},...,\{d_l\}\} \cup \{\{\alpha_0\},...,. \{\alpha_r\}\} \cup \{f_{\alpha_m}(n): m=0,..,r \text{ and } n \in U_{\alpha_m} \},
\]
we shall define a homomorphism $\phi_0: \text{span}(E_0) \to 2$ so that, for every $j=0,...,l$,
\[
\phi_0(\{d_j\})=F(d_j)
\]
and, for each $i=0,...,r$,
\[
\phi_0(\{\alpha_i\})=F(\alpha_i)
\]
and
\[
\phi_0(\{\alpha_i\}) = p_{\alpha_i}-\lim_{n \in U_{\alpha_i}} \phi_0(f_{\alpha_i}(n)).
\]
This can be done, since $\{\{d_0\},...,\{d_l\}\} \cup \{\{\alpha_0\},...,\{\alpha_r\}\}$ and $\{f_{\alpha_m}(n): m=0,..,r \text{ and } n \in U_{\alpha_m} \}$ are linearly independent, and thus there exists $R_0 \subset \{f_{\alpha_m}(n): m=0,..,r \text{ and } n \in U_{\alpha_m} \}$ so that $|R_0| \leq r+l+2$ and
\[
\{\{d_0\},...,\{d_l\}\} \cup \{\{\alpha_0\},...,\{\alpha_r\}\} \cup (\{f_{\alpha_m}(n): m=0,..,r \text{ and } n \in U_{\alpha_m} \} \setminus R_0)
\]
is linearly independent. 

We shall now define recursively homomorphisms $\phi_k: \text{span}(E_0 \cup \{f_{\alpha_m}(n): r<m \leq k+r \text{ and } n \in U_{\alpha_m}\} \cup \{\{\alpha_i\}: r<i\leq k+ r \}) \to 2$ satisfying that, for each $k \in \omega$,
\begin{enumerate}[1)]
\item $\phi_0$ is the homomorphism defined above;
\item $ \phi_k(\{\alpha_{k+r}\})=p_{\alpha_{k+r}}-  \lim_{n \in U_{\alpha_{k+r}}} \phi_{k}(f_{\alpha_{k+r}} (n))$;
\item $\phi_{k+1}$ extends $\phi_{k}$.
\end{enumerate}
Suppose that, for $N \in \omega$, we have defined homomorphisms $\phi_0,...,\phi_N$ satisfying 1), 2) and 3). By Lemma \ref{novo}, there is a finite subset $R_{N+1}$ of $\{f_{\alpha_{r+N+1}}(n): n \in U_{\alpha_{r+N+1}}\}$ such that
\begin{align*}
\text{span}(\{\{d_0\},...,\{d_l\}\} &\cup \{\{\alpha_0\},...,\{\alpha_{r+N}\}\} \cup  \{f_{\alpha_m}(n): 0 \leq m \leq r+N \text{ and } n \in U_{\alpha_m}\}) \bigcap \\
&\text{ span}(\{f_{\alpha_{r+N+1}}(n): n \in U_{\alpha_{r+N+1}} \} \setminus R_{N+1} ) = \{\emptyset\}.
\end{align*}
Therefore, we may define the homomorphism $\phi_{N+1}$ to be equal to $\phi_N$ in $\text{span}(\{\{d_0\},...,\{d_l\}\} \cup \{\{\alpha_0\},...,\{\alpha_{r+N}\}\} \cup \{f_{\alpha_m}(n): 0 \leq m \leq r+N \text{ and } n \in U_{\alpha_m}\})$ and so that
\[
\phi_{N+1}(\{\alpha_{r+N+1}\})=  p_{\alpha_{r+N+1}}-\lim_{n \in U_{\alpha_{r+N+1}}} \phi_{N+1}(f_{\alpha_{r+N+1}}(n)).
\]

Thus, we have proved that there exists homomorphisms $\phi_k$ satisfying 1), 2) and 3) for every $k \in \omega$. If $\phi$ is any homomorphism defined in $[D]^{<\omega}$ extending $ \bigcup_{k \in \omega} \phi_k$, then
\begin{enumerate}[i)]
    \item $\forall \alpha \in D \cap I$, $\phi(\{\alpha\})=p_{\alpha}-\lim_{n \in U_{\alpha}} \phi(f_{\alpha}(n))$;
    \item $\forall d \in D_0$, $\phi(\{d \}) = F(d)$,
\end{enumerate}
as we want.
\end{proof}

\begin{Rem}\label{remm} Note that the homomorphism $\phi$ given by the previous lemma may be defined satisfying additional properties, since we have freedom in an infinite subset of $\{f_{\alpha_k}(n): n \in U_{\alpha_k}\}$, for each $k \in \omega$. For instance, given $\alpha \in D \cap I$, we can choose $\phi$ satisfying also that
\[
\forall i \in 2, |\{n \in \omega: \phi(f_{\alpha}(n)) = i\}| = \omega.
\]
In fact, homomorphisms satisfying this property were constructed in \cite{Michael}.
\end{Rem}

\section{A countably compact group without non-trivial convergent sequences of size $2^{\mathfrak{c}}$}

\begin{Teo}
There is a Hausdorff countably compact topological Boolean group of size $2^{\mathfrak{c}}$ without non-trivial convergent sequences.
\end{Teo}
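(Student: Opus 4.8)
The plan is to build a group $G \le \mathrm{Hom}([2^{\mathfrak c}]^{<\omega},2)$ of size $2^{\mathfrak c}$ by transfinite recursion of length $2^{\mathfrak c}$, using Lemma~\ref{Homs} as the engine that witnesses countable compactness at each stage while simultaneously destroying any potential non-trivial convergent sequence. Concretely, I would identify the underlying Boolean group with $[2^{\mathfrak c}]^{<\omega}$ and realize $G$ as a subgroup of the Tychonoff cube $2^{\mathrm{Hom}([2^{\mathfrak c}]^{<\omega},2)}$ via the evaluation embedding, where each element $\phi$ of the group is a character/homomorphism. The topology on $G$ is the one inherited from this cube, so convergence and accumulation are computed coordinatewise through the chosen homomorphisms. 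The goal of the recursion is to enumerate, in a suitable bookkeeping of length $2^{\mathfrak c}$, all the countably infinite sequences that could arise in $G$ and assign each one a $p_\alpha$-limit point that already lies in $G$, closing off countable compactness, while keeping the set of characters rich enough to separate points and to kill honest convergent sequences.

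First I would set up the bookkeeping: using $2^{\mathfrak c}=(2^{\mathfrak c})^\omega$ (a consequence of cardinal arithmetic that holds in ZFC for this cardinal), enumerate all relevant one-to-one sequences of linearly independent generators $\{\alpha\}$ so that at stage $\alpha$ we handle the sequence $f_\alpha$ of basic elements indexed by a countable set $D$, and apply Lemma~\ref{Homs} with the family $\{p_\alpha\}$ from Proposition~\ref{suitable} to obtain a homomorphism $\phi$ satisfying $\phi(\{\alpha\})=p_\alpha\text{-}\lim_n \phi(f_\alpha(n))$. The key structural point is that Lemma~\ref{Homs} lets me prescribe finitely many values of $\phi$ in advance (the data $F:D_0\to 2$), which is exactly what is needed to meet separation requirements and to avoid collisions with previously committed characters. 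At each step I add the new generator and the witnessing homomorphisms to the group, maintaining that the running family of characters remains consistent (they agree on the common domain) and that linear independence is preserved so that the generators stay a genuine basis.

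To force \emph{no non-trivial convergent sequences}, I would exploit Remark~\ref{remm}: for any sequence $(x_n)$ that threatens to converge to some $x$, one uses the freedom in an infinite subset of $\{f_\alpha(n):n\in U_\alpha\}$ to choose a homomorphism $\phi$ with $|\{n:\phi(f_\alpha(n))=i\}|=\omega$ for both $i\in 2$. Such a $\phi$ is a continuous character taking each of the values $0$ and $1$ infinitely often along the sequence, hence the sequence cannot converge in the $\phi$-coordinate, and therefore is not convergent in $G$. The recursion must be arranged so that every potential non-trivial convergent sequence is eventually caught by some stage at which such a splitting character is installed; since the group is Boolean, a non-trivial sequence means infinitely many distinct terms differing from the limit, which translates into infinitely many distinct basic generators that Remark~\ref{remm} can separate.

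The hard part will be the \emph{simultaneous} satisfaction of the two opposing demands across the recursion: countable compactness asks me to \emph{add} limit points (new elements and characters), while absence of convergent sequences asks me to \emph{add} characters that split sequences — and both modifications must respect all previously fixed finite pieces of data and preserve linear independence of the generating set. The main obstacle is the consistency management, i.e.\ showing that at each of the $2^{\mathfrak c}$ stages the finitely many prescribed values coming from earlier commitments, together with the new $p_\alpha$-limit constraint, never contradict each other; this is precisely where Lemma~\ref{novo}, Corollary~\ref{antigo}, and the linear-independence conclusion~III) of Proposition~\ref{suitable} do the work, guaranteeing that a homomorphism with the required values always exists. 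Once the recursion goes through, $G=\bigcup_{\alpha<2^{\mathfrak c}}G_\alpha$ is a Hausdorff Boolean group of size $2^{\mathfrak c}$ in which every sequence has a $p_\alpha$-limit (countable compactness) and no non-trivial sequence converges, completing the proof.
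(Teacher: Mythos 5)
There is a genuine gap: you have described an architecture for a proof rather than a proof, and the architecture you describe is not the one the paper uses and introduces difficulties that you explicitly flag (``the hard part'', ``the main obstacle'') but never resolve. You propose a transfinite recursion of length $2^{\mathfrak{c}}$ that enumerates \emph{all} countable sequences in the group and ``closes off'' countable compactness by assigning each one a $p_\alpha$-limit, while incrementally adding characters and maintaining consistency across stages. Nothing in your write-up shows how this consistency management is carried out, and it is not clear it can be: the ultrafilters $p_\alpha$ of Proposition \ref{suitable} are tied to the designated sequences $f_\alpha$ (the linear-independence conclusion III) is only guaranteed for those), so you cannot assign a $p_\alpha$-limit to an arbitrary sequence, and sequences with finite range or linearly dependent range cannot be handled by ``adding a new generator'' at all. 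Moreover, if characters are built up stage by stage, each must eventually be total and must respect the limit equation at \emph{every} $\alpha<2^{\mathfrak{c}}$ while also splitting the sequences you want to kill; reconciling these demands is precisely the content of the proof, and you have deferred it entirely.

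The paper avoids all of this with a construction that is essentially non-recursive in your sense. The group is all of $[2^{\mathfrak{c}}]^{<\omega}$ from the outset; one fixes a family $\{f_\alpha:\alpha\in[\omega,2^{\mathfrak{c}})\}$ of one-to-one linearly independent sequences such that every infinite $X\subset[2^{\mathfrak{c}}]^{<\omega}$ contains some $\rng(f_\alpha)$ and $\rng(f_\alpha)\subset[\alpha]^{<\omega}$; and every $\phi\in\homm$ is extended \emph{uniquely} to $\bar\phi$ by the recursion $\bar\phi(\{\alpha\})=p_\alpha-\lim\bar\phi(f_\alpha(n))$. The topology is the initial group topology from this fixed family of $\mathfrak{c}$ characters. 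Countable compactness is then immediate --- every infinite set contains some $\rng(f_\alpha)$, whose $p_\alpha$-limit $\{\alpha\}$ is already in the group --- with no closing-off argument and no limit points to add. Lemma \ref{Homs} and Remark \ref{remm} are used only to show the character family is rich enough: they produce, for each suitably closed countable $D$, a homomorphism $\psi$ on $[D]^{<\omega}$ with prescribed finite values and with both values attained infinitely often along a given $f_\alpha$, and the point is that $\psi$ automatically agrees with $\bar\Phi$ for $\Phi=\restr{\psi}{[\omega]^{<\omega}}$ because $D$ is suitably closed. This yields Hausdorffness and verifies the hypothesis of Lemma \ref{Lema}, which kills non-trivial convergent sequences. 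Your proposal names the right ingredients (Proposition \ref{suitable}, Lemma \ref{Homs}, Remark \ref{remm}, Corollary \ref{antigo}) but misses the two ideas that make the proof work: that countable compactness needs only accumulation points of infinite \emph{sets}, delivered for free by condition 1) on the $f_\alpha$'s, and that each character is globally determined by its restriction to $[\omega]^{<\omega}$, so there is no family of partial characters to keep consistent.
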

\begin{proof}
We shall construct a topology on $[2^\mathfrak{c}]^{<\omega}$ as follows. 

Following \cite{Michael}, we fix an indexed family $\{f_{\alpha}: \alpha \in [ \omega, 2^\mathfrak{c})\} \subset \left([2^{\mathfrak{c}}]^{<\omega} \right)^{\omega}$ of one-to-one sequences such that
\begin{enumerate}[1)]\label{deffun}
    \item for every infinite $X \subset [2^{\mathfrak{c}}]^{< \omega}$, there is an $\alpha \in [\omega, 2^{\mathfrak{c}})$ with $\rng(f_{\alpha}) \subset X$;
    \item each $f_{\alpha}$ is a sequence of linearly independent elements;
    \item $\rng(f_{\alpha}) \subset [\alpha]^{< \omega}$ for every $\alpha \in [\omega, 2^{\mathfrak{c}})$.
    \end{enumerate}

Let $\{p_{\alpha}: \alpha < 2^{\mathfrak{c}}\} \subset \omega^*$ be the family of free ultrafilters given by Proposition \ref{suitable}. Define, for each $\phi \in \homm$, its
extension  $\bar{\phi} \in \Hom$ recursively, by putting, for every $\alpha \in [\omega, 2^{\mathfrak{c}})$,
\begin{equation} \label{defi}
\bar{\phi}(\{\alpha\}) = p_{\alpha } -\lim \bar{\phi}(f_{\alpha}(n)).
\end{equation}
Let $\tau$ be the weakest (group) topology on $[2^{\mathfrak{c}}]^{< \omega}$ making all $\bar{\phi}$ continuous ($\phi \in \homm$). For every $\alpha \in [\omega, 2^{\mathfrak{c}})$, it follows that
\[
\{\alpha\} = p_{\alpha}- \lim_{n \in \omega} f_{\alpha}(n),
\]
since the topology is generated by finite intersections of inverse images of $\bar{\phi}$ functions, which satisfy (\ref{defi}). Therefore, as the family $\{f_{\alpha}: \alpha \in [ \omega, 2^{\mathfrak{c}})\}$ satisfies 1), the topological space $([2^{\mathfrak{c}}]^{<\omega},\tau)$ is countably compact.

Next we introduce the notion of \textit{suitably closed} set relative to this construction\footnotemark.
\begin{Def}[\cite{Michael}]
A set $D \in [2^{\mathfrak{c}}]^\omega$ is called \textit{suitably closed} if $\omega \subset D$ and $\bigcup_{n \in \omega} f_{\alpha}(n) \subset D$, for every $\alpha \in D \setminus \omega$.
\end{Def}
\footnotetext{The idea of suitably closed sets already appeared in \cite{koszmider&tomita&watson}, without using a name. Many subsequent works that used Martin's Axiom for countable posets and selective ultrafilters also used this idea.}
The topology also makes the topological group Hausdorff. Indeed, given $x \in [2^{\mathfrak{c}}]^{< \omega} \setminus \{\emptyset\}$, let $D \in [2^\mathfrak{c}]^{\omega}$ be a suitably closed set so that $x \subset D$. We may use Lemma \ref{Homs} to construct a homomorphism $\psi:[D]^{<\omega} \to 2$ satisfying (\ref{defi}) for each $\alpha \in D \setminus \omega$ and such that $\psi(x) = 1$. Hence, if $\Phi \doteq \restr{\psi}{[\omega]^{<\omega}}$, the homomorphism $\overline{\Phi} \in \Hom$ is such that $\overline{\Phi}(x) =1$, since $\restr{\overline{\Phi}}{[D]^{<\omega}}= \psi$\footnotemark.
\footnotetext{Here we justify the Hausdorff property of the group directly, unlike \cite{Michael} does.}

To finish, we enunciate the following lemma, which is used to show that the topological space $([2^{\mathfrak{c}}]^{<\omega},\tau)$ does not contain non-trivial convergent sequences. Versions of this result were already used in previous articles, but we enunciate here the version that appears in \cite{Michael}. 

\begin{Lema}\label{Lema}
If, for every $D \in [2^{\mathfrak{c}}]^{\omega}$ suitably closed and $\alpha \in D \setminus \omega$, there is $\psi \in \text{Hom}([D]^{< \omega},2)$ such that
\begin{enumerate}[(1)]
    \item $\forall \beta \in D \setminus \omega$, $\psi(\{\beta\}) = p_{\beta}-\lim_{n \in \omega} \psi(f_{\beta}(n))$;
    \item $\forall i \in 2$, $|\{n \in \omega: \psi(f_{\alpha}(n))=i\}|= \omega$,
\end{enumerate}
then the topology defined above on $[2^{\mathfrak{c}}]^{< \omega}$ does not contain non-trivial convergent sequences.
\end{Lema}
\noindent The hypothesis of the lemma above are satisfied, due to Lemma \ref{Homs} and Remark \ref{remm}, following it. 
\end{proof}
\section{A selectively pseudocompact group which is not countably pracompact}

\begin{Teo}\label{pracompacto}
There is a Hausdorff selectively pseudocompact group which is not countably pracompact.
\end{Teo}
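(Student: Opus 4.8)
The plan is to build the group on the same Boolean vector space $G=[2^{\mathfrak{c}}]^{<\omega}$ used in the previous section, equipped with the group topology $\tau$ generated by the extended characters $\bar{\phi}$ obeying the defining relation $\bar{\phi}(\{\alpha\})=p_{\alpha}\text{-}\lim_{n}\bar{\phi}(f_{\alpha}(n))$, where $\{p_{\alpha}:\alpha<2^{\mathfrak{c}}\}$ is the family from Proposition \ref{suitable}. The only change from the countably compact example is the choice of the enumerating family $\{f_{\alpha}\}$. Requiring that \emph{every} infinite subset of $G$ contain some $\rng(f_{\alpha})$ forces countable compactness, so instead I would impose only a weaker, ``fat-target'' coverage: for every countable family $(U_{n})_{n}$ of nonempty basic open sets there is an $\alpha$ whose range \emph{threads} the family, i.e. there is a one-to-one $k\mapsto n_{k}$ with $f_{\alpha}(k)\in U_{n_{k}}$ for all $k$ in a set belonging to $p_{\alpha}$. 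I would secure this, together with the structural requirements (each $f_{\alpha}$ a one-to-one enumeration of linearly independent elements with $\rng(f_{\alpha})\subseteq[\alpha]^{<\omega}$), by a transfinite bookkeeping over the $\mathfrak{c}$ many countable families of basic open sets, keeping the ranges \emph{thin} (small, spread-out supports) so that no single range can absorb a generic sequence. Hausdorffness and the validity of the defining relation are checked exactly as in the previous section, via Lemma \ref{Homs}.

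Selective pseudocompactness then follows from the threading property. Given nonempty open $(U_{n})_{n}$, shrink each to a basic open set, pick $\alpha$ threading the family, set $x_{n_{k}}=f_{\alpha}(k)$, and complete to a transversal by choosing the remaining $x_{n}\in U_{n}$ arbitrarily. Taking $p$ to be the pushforward of $p_{\alpha}$ under $k\mapsto n_{k}$, the relation $\{\alpha\}=p_{\alpha}\text{-}\lim_{n}f_{\alpha}(n)$ upgrades to $\{\alpha\}=p\text{-}\lim_{n}x_{n}$, so $(x_{n})$ is a convergent transversal, witnessing selective pseudocompactness (and hence pseudocompactness).

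The hard part is showing the group is \emph{not} countably pracompact, i.e. that \emph{no} dense $D\subseteq G$ has all of its sequences accumulating. The naive route—extracting a closed discrete sequence from $D$ by populating a discrete open expansion of some closed discrete set—cannot work, because in a pseudocompact space an infinite closed discrete set never admits a locally finite open expansion; this is exactly where the difficulty concentrates. Instead, given a dense $D$, I would recursively select a linearly independent sequence $(x_{n})\subseteq D$ that is independent of the limit structure: using density together with the thinness of the ranges and Corollary \ref{antigo}/Lemma \ref{novo}, I would arrange that each $x_{n}$ carries a private coordinate not forced by the $p_{\beta}$-limit relations of the previously chosen points nor lying in any single range that could trap the sequence. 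The aim of the selection is that for \emph{every} $p\in\omega^{*}$ the $p$-limit of $(x_{n})$, computed in the compact completion $2^{\homm}\supseteq G$, has infinite support and hence lies outside $G$; then $(x_{n})$ has no accumulation point in $G$, so $D$ fails. Concretely, for a putative accumulation point $y\in G$ (a \emph{finite} set) and an ultrafilter $p$ with $y=p\text{-}\lim_{n}x_{n}$, I would invoke Lemma \ref{Homs}, with the freedom of Remark \ref{remm}, to build a continuous character $\bar{\psi}$ respecting all limit relations yet separating $y$ from a $p$-large set of the $x_{n}$, a contradiction.

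The main obstacle is therefore the non-pracompactness, and specifically reconciling it with selective pseudocompactness: the ranges $\rng(f_{\alpha})$ must be simultaneously \emph{fat enough} to thread every countable family of nonempty open sets and \emph{thin enough} that a generic sequence drawn from an arbitrary dense set escapes to the completion. The balance is delivered by Proposition \ref{suitable}, which supplies the pairwise disjoint $U_{\alpha}\in p_{\alpha}$ keeping distinct ranges linearly independent, and by Lemma \ref{Homs}, which is exactly the device converting ``private, unforced coordinates'' into separating characters; arranging the selection of $(x_{n})$ so as to defeat all pairs $(y,p)$ at once is the delicate point.
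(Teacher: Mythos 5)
Your proposal correctly isolates the tension (the hitting family must be rich enough to witness selective pseudocompactness yet sparse enough that some sequence from every dense set escapes), but the mechanism that resolves it is missing, and the two central steps of the non-pracompactness argument are asserted rather than proved. First, the ``threading'' requirement on the $f_\alpha$ is circular as stated: the basic open sets are finite intersections of preimages under the characters $\bar{\phi}$, and each $\bar{\phi}$ is defined by recursion on the very family $\{f_\alpha\}$ you propose to choose by bookkeeping against those open sets. The paper avoids this by never asking the $f$'s to meet open sets at all: it partitions $\mathfrak{c}$ into blocks $J_\beta$ of size $\mathfrak{c}$, lets the $f_\xi$ (for $\xi\in J_\beta^2$) hit only the infinite subsets of $[J_\beta]^{<\omega}$ (a purely combinatorial condition fixed before the topology exists), and realizes an arbitrary sequence of basic open sets by singletons $\{\xi_n\}\subset J_\beta^1$ drawn from an enumeration $(g_\xi)$ of all finite $\{0,1\}$-patterns of character values; a single $f_\xi$ with range inside $\{\{\xi_n\}:n\in\omega\}$ then supplies the accumulation point.

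Second, and more seriously, you give no device that forces a dense set to contain a bad sequence, nor one that defeats all pairs $(y,p)$ simultaneously. The paper's device is a function $F:\mathfrak{c}\times\mathfrak{c}\to 2$ such that every countable set of columns admits a row on which $F$ is identically $1$, and every countable set of rows admits rows on which $F$ is identically $0$ and identically $1$; the generating characters $\overline{\sigma_{\mu}}$ (for $\mu\in I_\alpha$) are forced to vanish on every block $J_\beta$ with $F(\alpha,\beta)=0$. Consequently a dense set $Z$ must meet uncountably many blocks (otherwise some $\overline{\sigma_{\mu}}$ annihilates all of $Z$), so one extracts $t_n\in Z$ whose block-supports keep leaving the union of the previous ones; the resulting tails live on pairwise disjoint suitably closed sets, on which homomorphisms in $\mathcal{A}$ amalgamate freely, so for any candidate accumulation point $x$ one builds $\theta\in\mathcal{A}$ with $\theta(x)=0$ and $\theta(t_n)=1$ for all large $n$, which kills every ultrafilter at once. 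Your plan of giving each $x_n$ a ``private coordinate'' and then invoking Lemma \ref{Homs} separately for each pair $(y,p)$ does not obviously close: there are $2^{\mathfrak{c}}$ ultrafilters, the characters respecting the limit relations are constrained, and the cofinite separation obtained from the disjoint-block structure is precisely what makes the paper's argument uniform in $p$. (Note also that the paper's example lives on $[\mathfrak{c}]^{<\omega}$ and uses Proposition \ref{primeiro}, not the $2^{\mathfrak{c}}$-sized machinery of Proposition \ref{suitable}.)
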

\begin{proof}

Let $\{p_{\xi}: \xi< \mathfrak{c}\}$ be the family of free ultrafilters given by Proposition \ref{primeiro}. We fix at the beginning a function $F : \mathfrak{c} \times \mathfrak{c} \to 2$ so that:
\begin{itemize}
    \item For every $A \in [\mathfrak{c}]^{\omega}$, there exists $\beta \in \mathfrak{c}$ such that $F(A \times \{\beta\}) = 1$.
    \item For every $B \in [\mathfrak{c}]^{\omega}$, there exists $\alpha_0 \in \mathfrak{c}$ and $\alpha_1 \in \mathfrak{c}$ such that $F(\{\alpha_0\} \times B) = 0$ and $F(\{\alpha_1\} \times B) = 1$.
\end{itemize}

Let $(J_{\beta})_{\beta < \mathfrak{c}}$ be a partition of $\mathfrak{c}$ such that, for each $\beta< \mathfrak{c}$, $|J_{\beta}|= \mathfrak{c}$. Consider also, for each $\beta< \mathfrak{c},$ a partition $\{J_{\beta}^1, J_{\beta}^2 \}$ of $J_{\beta}$ satisfying that $|J_{\beta}^1|=|J_{\beta}^2|=\mathfrak{c}$. We suppose that the initial $\omega$ elements of $J_{\beta}$ are in $J_{\beta}^1$, for every $\beta < \mathfrak{c}$. Given $\beta< \mathfrak{c}$, let $\{f_{\xi}^{\beta}: \xi \in J^2_{\beta}\}$ be a family of functions $f^{\beta}_{\xi}: \omega \to [J_{\beta}]^{< \omega}$ such that 
\begin{enumerate}[1)]
    \item each $f_{\xi}^{\beta}$ is an one-to-one enumeration of linearly independent elements of $[J_{\beta}]^{< \omega}$;
    \item for every infinite $X \subset [J_{\beta}]^{<\omega}$, there exists $\xi \in J_{\beta}^{2}$ such that $\text{rng}(f_{\xi}^{\beta}) \subset X$;
    \item for every $\xi \in J_{\beta}^2$, $\text{rng}(f_{\xi}^{\beta}) \subset [\xi]^{<\omega}$.
    \end{enumerate}
From now on, we will omit the superscript of $f_{\xi}^{\beta}$, since for each $\xi \in \bigcup_{\beta < \mathfrak{c}} J_{\beta}^2$, there is a unique $\beta< \mathfrak{c}$ such that $\xi \in J_{\beta}$. We also define the sets $J_1 \doteq \bigcup_{\beta < \mathfrak{c}} J_{\beta}^1$ and $J_2 \doteq \bigcup_{\beta < \mathfrak{c}} J_{\beta}^2$. Lastly, we fix another partition $(I_{\alpha})_{\alpha < \mathfrak{c}}$ of $\mathfrak{c}$ satisfying that $|I_{\alpha}| = \mathfrak{c}$, for every $\alpha< \mathfrak{c}$. 

We shall now define which are the suitably closed sets of this construction.

\begin{Def}
A set $A \in [\mathfrak{c}]^{\omega}$ is \textit{suitably closed} if, for every $\xi \in A \cap J_2$, we have $\bigcup_{n \in \omega}f_{\xi}(n) \subset A$.
\end{Def}

Let $\mathcal{A}$ be the set of all homomorphisms $\sigma: [A]^{< \omega} \to 2$, with $A \in [\mathfrak{c}]^{\omega}$ suitably closed, satisfying that
\[
\sigma(\{\xi\}) = p_{\xi}-\lim_{n \in \omega} \sigma(f_{\xi}(n)),
\]
for every $\xi \in A \cap J_2$.
Enumerate $\mathcal{A}$ by $\{\sigma_{\mu}: \omega \leq \mu < \mathfrak{c}\}$ and, without loss of generality, we may assume that  $\bigcup \text{dom}(\sigma_{\mu}) \subset \mu$\footnotemark, for each $\omega \leq \mu < \mathfrak{c}$. We may also suppose that for each $\sigma \in \mathcal{A}$ and $\alpha< \mathfrak{c}$, there exists $\mu \in I_{\alpha}$ so that $\sigma_{\mu}= \sigma$. Now, for each $\mu \in [\omega, \mathfrak{c})$, we shall construct a suitable homomorphism $\overline{\sigma_{\mu}}:[\mathfrak{c}]^{< \omega} \to 2$, as done below. 
\footnotetext{In case of  a homomorphism $\sigma:[C]^{<\omega}\to 2$, with $C \subset \mathfrak{c}$, note that $\bigcup \text{dom}(\sigma)= C$.}

Firstly, for each $\beta< \mathfrak{c}$, we enumerate all functions $g: S \to 2$ with $S \in [\mathfrak{c}]^{<\omega}$ by $\{g_{\xi}^{\beta}: \xi \in J_{\beta}^1 \}$ and, without loss of generality, we may assume that $\text{dom}(g^{\beta}_{\xi}) \subset \xi$, for every $\xi \in J_{\beta}^1$, and that for each $g :S \to 2$ as above, $|\{\xi \in J_{\beta}^1:g_{\xi}^{\beta} = g\}| = \mathfrak{c}$. As done before, from now on we omit the superscript of $g_{\xi}^{\beta}$, since for each $\xi \in J_1$, there is a unique $\beta< \mathfrak{c}$ such that $\xi \in J_{\beta}^1$. 

Given $\omega \leq \mu < \mathfrak{c}
$, we start defining an auxiliary homomorphism $\psi_{\mu}:[\mathfrak{c}]^{<\omega} \to 2$, extending $\sigma_{\mu}$. First, if $\xi < \mathfrak{c}$ is such that $\{\xi\} \in \text{dom}(\sigma_\mu)$, we put $\psi_{\mu}(\{\xi\})= \sigma_{\mu}(\{\xi\})$. Otherwise, we have a few cases to consider: firstly, for every $\xi \in J_1$, we put $\psi_{\mu}(\{\xi\}) = g_{\xi}(\mu)$ if $\mu \in \text{dom}(g_{\xi})$ and $\psi_{\mu}(\{\xi\})=0$ if $\mu \notin \text{dom}(g_{\xi})$; for elements $\xi \in J_2$, we define $\psi_{\mu}$ recursively, by putting
\[
\psi_{\mu}(\{\xi\})= p-\lim_{n \in \omega}\psi_{\mu}(f_{\xi}(n)).
\]
Note that since $\{\{\xi\}: \xi < \mathfrak{c}\}$ is a base for $[\mathfrak{c}]^{<\omega}$, the definition above uniquely extends each $\sigma_{\mu}$ to a homomorphism $\psi_{\mu}:[\mathfrak{c}]^{<\omega} \to 2$, which satisfy the previous equation for every $\xi \in J_2$, by construction. 

Now, to obtain the homomorphisms $\overline{\sigma_{\mu}}:[\mathfrak{c}]^{<\omega} \to 2$, we shall make some modifications to the homomorphisms $\psi_{\mu}$ defined before, in the following way. For every $\omega \leq \mu < \mathfrak{c}$ and $\xi<\mathfrak{c}$, we have that $\mu \in I_{\alpha}$ and $\xi \in J_{\beta}$ for unique $\alpha, \beta < \mathfrak{c}$, thus we put
\[
\begin{cases}
\overline{\sigma_{\mu}}(\{\xi\}) = 0, & \text{if }  F(\alpha, \beta)=0\\
        \overline{\sigma_{\mu}}(\{\xi\}) = \psi_{\mu}(\{\xi\}),    & \text{if } F(\alpha, \beta)=1.
\end{cases}\]
Again, in this way we define uniquely a homomorphism $\overline{\sigma_{\mu}}:[\mathfrak{c}]^{<\omega} \to 2$, for each $\mu < \mathfrak{c}$. Note that, for every $\xi \in J_2$ and $\omega \leq \mu <\mathfrak{c}$, 
\[
\overline{\sigma_{\mu}}(\{\xi\}) = p_{\xi}-\lim_{n \in \omega} \overline{\sigma_{\mu}}(f_{\xi}(n)).
\]
In fact, if $\mu \in I_{\alpha}$ and $\xi \in J_{\beta}^2$ are such that $F(\alpha, \beta)=0$, then $\overline{\sigma_{\mu}}(\{\xi\})=\overline{\sigma_{\mu}}(f_{\xi}(n))=0$ for every $n \in \omega$. Otherwise, $\overline{\sigma_{\mu}}(\{\xi\})= \psi_{\mu}(\{\xi\})$ and $\overline{\sigma_{\mu}}(f_{\xi}(n)) = \psi_{\mu}(f_{\xi}(n))$, for every $n \in \omega$. Furthermore, note that $\overline{\sigma_{\mu}}$ is non-trivial for every $\omega \leq \mu < \mathfrak{c}$. Indeed, given $\mu \in I_{\alpha}$, take $\beta< \mathfrak{c}$ such that $F(\alpha,\beta)=1$ and let $\xi \in J_{\beta}^1 \setminus \bigcup \text{dom}(\sigma_{\mu})$ be such that $g_{\xi}$ is the function which has $\text{dom}(g_{\xi})=\{\mu\}$ and $g_{\xi}(\mu)=1$. Hence, we have that $\overline{\sigma_{\mu}}(\{\xi\})= g_{\xi}(\mu)=1$.

Let now $\overline{\mathcal{A}} \doteq \{\overline{\sigma_{\mu}}: \omega \leq \mu < \mathfrak{c}\}$ and $\tau$ be the weakest (group) topology on $[\mathfrak{c}]^{<\omega}$ making every homomorphism in $\overline{\mathcal{A}}$ continuous. First, notice that $([\mathfrak{c}]^{<\omega},\tau)$ is Hausdorff. In fact, given $x \in [\mathfrak{c}]^{<\omega}\setminus \{\emptyset\}$, let $D \in [\mathfrak{c}]^{\omega}$ be a suitably closed set so that $x \subset D$, and let $\alpha < \mathfrak{c}$ be such that $F(\{\alpha\} \times D)$=1. According to Lemma \ref{Homs}, there exists $\sigma:[D]^{<\omega}\to 2$, $\sigma \in \mathcal{A}$, such that $\sigma(x)=1$ and, by construction, there exists $\mu_0 \in I_{\alpha}$ such that $\sigma_{\mu_0}=\sigma$. Hence, we have that $\overline{\sigma_{\mu_0}}(x)=1$.

\begin{Cla}
$([\mathfrak{c}]^{<\omega},\tau)$ is a selectively pseudocompact group.
\end{Cla}
\begin{proof}[Proof of the claim.]\renewcommand{\qedsymbol}{$\blacksquare$}
Let $\{U_n: n \in \omega\}$ be a sequence of nonempty open sets in the group. For each $U_n$, we fix a function $g_n:S_n \to 2$, with $S_n \in [\mathfrak{c}]^{<\omega} \setminus \{\emptyset\}$, so that
\[
U_n \supset \bigcap_{\mu \in S_n} \overline{\sigma_{\mu}}^{^{-1}}(g_n(\mu)).
\]

Thus, for each $n \in \omega$, we may define the set $C_n \doteq \{\alpha < \mathfrak{c}: \mu \in I_{\alpha} \text{ for some } \mu \in S_n\}$, and also $C \doteq \bigcup_{n \in \omega} C_n \in [\mathfrak{c}]^{\omega}$. By the property of $F$ function, there exists $\beta \in \mathfrak{c}$ such that $F(C \times \{\beta\})=1$. For each $n \in \omega$, let $\xi_n \in J_{\beta}^1 \setminus \bigcup_{\mu \in S_n} \big( \cup \text{dom}(\sigma_{\mu}) \big)$ be such that $g_{\xi_n}=g_n$. We may choose such elements $\xi_n$ pairwise distinct. By the way we have defined the homomorphisms which generate the topology, for every $\mu \in S_n$, $\overline{\sigma_{\mu}}(\{\xi_n\})=\psi_{\mu}(\{\xi_n\})=g_n(\mu)$, and therefore $\{\xi_n\} \in U_n$, for every $n \in \omega$. 

Now, let $\xi \in J_{\beta}^2$ be such that $\text{rng}(f_{\xi}) \subset \{\{\xi_n\}: n \in \omega\}$. Since
\begin{equation}\label{claim1}
\overline{\sigma_{\mu}}(\{\xi\}) = p_{\xi}-\lim_{n \in \omega} \overline{\sigma_{\mu}}(f_{\xi}(n)),
\end{equation}
for every $\omega \leq \mu < \mathfrak{c}$, we have that $\{\xi\}$ is an accumulation point of  $\{\{\xi_n\}: n \in \omega\}$, ending the proof. 
\end{proof}

\begin{Cla}
$([\mathfrak{c}]^{<\omega}, \tau)$ is not a countably pracompact group.
\end{Cla}
\begin{proof}[Proof of the claim]\renewcommand{\qedsymbol}{$\blacksquare$}
Suppose that $Z$ is a subset of $G$ that is dense. We shall construct a sequence $\{t_{n}: n \in \omega\} \subset Z$ that does not have an accumulation point in $([\mathfrak{c}]^{<\omega},\tau)$. Such sequence shall satisfy that
\begin{equation}\label{isnot}
\text{SUPP}(t_{n})  \setminus \left( \bigcup_{m< n} \text{SUPP}(t_{m})\right) \neq \emptyset, \end{equation} for every $n \in \omega$, where, for each $D \in [\mathfrak{c}]^{<\omega}$, we define
\[
\text{SUPP}(D) \doteq \{\beta < \mathfrak{c}: J_{\beta} \cap D \neq \emptyset\}.
\]

First, fix $t_0 \in Z$ arbitrarily, and suppose that, for $k>0$, we have defined $\{t_{n}: n < k \} \subset Z$ satisfying equation (\ref{isnot}) for every $n<k$. We claim that $B \doteq \bigcup_{z \in Z} \text{SUPP}(z)$ cannot be a countable set. Indeed, if $B$ is countable, by construction of $F$ function, there exists an $\alpha \in \mathfrak{c}$ so that $F(\{\alpha\}\times B)=0$. Therefore, given $\mu \in I_{\alpha}$, $\overline{\sigma_{\mu}}(z)=0$ for every $z \in Z$, and thus $Z$ cannot be dense in $G$. Hence, there exists $t_{k} \in Z$ such that 
\[
\text{SUPP}(t_{k})  \setminus \left( \bigcup_{m<k} \text{SUPP}(t_{m})\right)  \neq \emptyset,
\]
which proves the existence of such sequence $\{t_{n}: n \in \omega\} \subset Z$.

Now we shall show that, for each $x \in [\mathfrak{c}]^{< \omega}$, $x$ is not an accumulation point of $\{t_{n}: n \in \omega\}$. First, note that there exists $k_0 \in \omega$ such that, for every $n \geq k_0$,
\[
\text{SUPP}(t_n) \setminus \left( \bigcup_{m<n} \text{SUPP}(t_m) \cup \text{SUPP}(x)   \right) \neq \emptyset.
\]
In fact, since $\text{SUPP}(x)$ is finite and (\ref{isnot}) holds, there cannot be infinite elements $t_n$ such that $\text{SUPP}(t_n) \subset \bigcup_{m<n}\text{SUPP}(t_m) \cup \text{SUPP}(x)$. 

Let
\[
F_0 \doteq \bigcup_{m<k_0}\text{SUPP}(t_m) \cup \text{SUPP}(x)
\]
and, for $i>0$,
\[
F_i \doteq \text{SUPP}(t_{k_0+i-1}) \setminus \left(\bigcup_{m< k_0+i-1} \text{SUPP}(t_m) \cup \text{SUPP}(x)  \right).
\]
Define also, for each $i \in \omega$,
\[
D_i \doteq \{\xi \in \bigcup_{n \in \omega} t_n \cup x: \text{SUPP}(\{\xi\}) \in F_i\},
\]
and let $A_i$ be a suitably closed set containing $D_i$ such that $\text{SUPP}(A_i) = \text{SUPP}(D_i)$. Since $(F_i)_{i \in \omega}$ is a family of pairwise disjoint sets, we have that $(A_i)_{i \in \omega}$ is also a family of pairwise disjoint sets.

According to Lemma \ref{Homs}, we may define a homomorphism $\theta_0 : [A_0]^{<\omega} \to 2$ such that $\theta_0 \in \mathcal{A}$ and $\theta_0(x)=0$. For $k>0$, suppose that we have constructed a set of homomorphisms $\{\theta_i: i <k\} \subset \mathcal{A}$ such that
\begin{enumerate}[i)]
   \item $\theta_0(x)=0$.
    \item $\theta_i$ is a homomorphism defined in $\big[ \bigcup_{j \leq i} A_j \big]^{<\omega}$ taking values in $2$, for each $i<k$.
    \item $\theta_i$ extends $\theta_{i-1}$ for each $0<i<k$.
    \item $\theta_{i}(t_{k_0+j})=1$ for each $0<i<k$ and $j=0,...,i-1$.
\end{enumerate}
Let $A_k$ be a suitably closed set containing $D_k$. Again by Lemma \ref{Homs}, we may define a homomorphism $\psi_k:[A_k]^{<\omega} \to 2$ so that $\psi_k \in \mathcal{A}$ and 
\[\psi_{k}\Big(t_{k_0+k-1} \setminus \bigcup_{j < k} D_j \Big) + \theta_{k-1}\Big(t_{k_0+k-1} \cap \bigcup_{j < k} D_j \Big) = 1.\]
Now, since $A_k \cap \bigcup_{i < k} A_i = \emptyset$, we may define a homomorphism $\theta_k : \big[\bigcup_{j \leq k} A_j \big]^{<\omega} \to 2$ extending both $\theta_{k-1}$ and $\psi_k$. Then, by construction, we have that $\theta_k(x)=0$ and $\theta_k(t_{k_0+j})=1$ for every $j=0,...,k-1$. Also, it follows that $\theta_k \in \mathcal{A}$, since $\psi_k \in \mathcal{A}$ and $\theta_i \in \mathcal{A}$ for every $i<k$. Therefore, there exists a family of homomorphisms $\{\theta_k: k \in \omega\} \subset \mathcal{A}$ satisfying i)-iv) for every $k \in \omega$.

Letting $A \doteq \bigcup_{n \in \omega} A_n$ and $\theta \doteq \bigcup_{n \in \omega} \theta_n$, the homomorphism $\theta:[A]^{<\omega} \to 2$ satisfy that $\theta \in \mathcal{A}$, since each $\theta_n \in \mathcal{A}$. Also, $\theta(x)=0$ and $\theta(t_{k_0+j})=1$ for every $j \in \omega$. Let $\alpha < \mathfrak{c}$ be such that $F \Big( \{\alpha\} \times \bigcup_{n \in \omega} \text{SUPP}(t_n) \cup \text{SUPP}(x) \Big)=1$ and $\mu \in I_{\alpha}$ be such that $\sigma_{\mu} = \theta$. Then, $\overline{\sigma_\mu}: [\mathfrak{c}]^{<\omega} \to 2$ satisfy that $\overline{\sigma_{\mu}}(t_n)=1$ for each $n \in \omega$, and $\overline{\sigma_{\mu}}(x)=0$. Therefore, $x \in [\mathfrak{c}]^{<\omega}$, which was chosen arbitrarily, is not an accumulation point of $\{t_n: n \in \omega\}$, hence $([\mathfrak{c}]^{<\omega},\tau)$ is not countably pracompact.
\end{proof}

\end{proof}

\section{A group which is not countably pracompact and has all powers selectively pseudocompact, from a single selective ultrafilter}

Assuming the existence of a selective ultrafilter $p$, we may use the same construction that was done in the previous section to show that there exists a selectively $p-$pseudocompact group which is not countably pracompact. Since selective $p-$pseudocompactness is productive and implies selective pseudocompactness, we will obtain a group which has all powers selectively pseudocompact and is not countably pracompact. In order to replace Lemma \ref{Homs}, we use some versions of results proved in Tomita, Garcia-Ferreira and Watson's paper \cite{selective}. 

Following the proof of Theorem \ref{pracompacto}, we consider the same function $F : \mathfrak{c} \times \mathfrak{c} \to 2$, families $(I_{\alpha})_{\alpha<\mathfrak{c}}$, $(J_{\beta})_{\beta < \mathfrak{c}}$ and partition $\{J_{\beta}^1, J_{\beta}^2 \}$ of $J_{\beta}$, for each $\beta< \mathfrak{c}$. Using a similar proof to \textit{Lemma 2.1} of \cite{selective}, one may show the following result.
\begin{Lema}[\cite{selective}, Lemma 2.1]\label{funcoes}
If $p \in \omega^*$ is a selective ultrafilter, then, for each $\beta< \mathfrak{c}$, there exists a family of one-to-one functions $\{f_{\xi}: \xi \in J_{\beta}^2\} \subset ([J_{\beta}]^{<\omega})^{\omega}$ such that
\begin{enumerate}[i)]
\item $\bigcup_{n \in \omega} f_{\xi}(n) \subset \text{max}\{\omega, \xi\}$, for every $\xi \in J_{\beta}^2$.
\item $\{[f_{\xi}]_p: \xi \in J_{\beta}^2\} \cup \{[\vec{\mu}]_{p}: \mu \in J_{\beta}\}$ is a base for $([J_{\beta}]^{<\omega})^{\omega}/p$.
\item For every one-to-one function $g \in ([J_{\beta}]^{<\omega})^{\omega}$, there exists distinct $\xi_0, \xi_1 \in J_{\beta}^2$ and two increasing sequences of positive integers $(n^0_k)_{k< \omega}$ and $(n^1_k)_{k < \omega}$ such that $f_{\xi_i}(k)=g(n^i_k)$, for every $k < \omega$ and $i \in 2$.
\end{enumerate}
\end{Lema}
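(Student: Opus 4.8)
The plan is to build the family $\{f_{\xi}:\xi\in J_{\beta}^2\}$ by a transfinite recursion of length $\mathfrak{c}$ that simultaneously forces linear independence, forces spanning (to obtain ii)), and realizes every one-to-one $g$ as two subsequences (to obtain iii)). Throughout, selectivity of $p$ enters through the characterizations recalled above: for every $h\in\omega^{\omega}$ there is $A\in p$ with $h\restriction A$ constant or one-to-one, and every $2$-colouring of $[\omega]^2$ is constant on some $[A]^2$ with $A\in p$. I begin with the cardinal bookkeeping: since $|[J_{\beta}]^{<\omega}|=\mathfrak{c}$ and $\mathfrak{c}^{\omega}=\mathfrak{c}$, the space $([J_{\beta}]^{<\omega})^{\omega}/p$ has dimension at most $\mathfrak{c}$, there are exactly $\mathfrak{c}$ one-to-one functions in $([J_{\beta}]^{<\omega})^{\omega}$, and (using disjoint countable letter-sets inside $J_{\beta}$) the quotient modulo the span of the constant classes $\{[\vec{\mu}]_p:\mu\in J_{\beta}\}$ still has dimension exactly $\mathfrak{c}=|J_{\beta}^2|$. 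Fix enumerations of the one-to-one functions and of a set of spanning targets, both of length $\mathfrak{c}$.

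Next I record two preparatory facts. Applying the first characterization to a representative of an arbitrary class, every element of $([J_{\beta}]^{<\omega})^{\omega}/p$ is either constant on a set in $p$, hence already in the linear span of $\{[\vec{\mu}]_p:\mu\in J_{\beta}\}$, or one-to-one on a set in $p$, hence represented by a one-to-one function. This is precisely where selectivity is indispensable: it is what lets ``constant functions together with one-to-one functions'' form a basis, and it guarantees that spanning tasks can always be met by one-to-one functions. Second, given any one-to-one $g\in([J_{\beta}]^{<\omega})^{\omega}$, the classical sunflower lemma (first fixing $|g(n)|$ on an infinite subsequence) yields an infinite set on which the values of $g$ form a $\Delta$-system with some root $R$; discarding at most the one term equal to $R$, the remaining values $g(n)$ are linearly independent, since each carries a nonempty part disjoint from the others. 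The ordinal constraint i) is automatic: $\bigcup_{n}g(n)$ is a countable, hence bounded, subset of $J_{\beta}$, and every final segment of $J_{\beta}^2$ has size $\mathfrak{c}$, so fresh indices $\xi\in J_{\beta}^2$ above $\sup\bigcup_{n}g(n)$ are always available.

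The recursion then processes, at each stage, either a spanning target $r$ or a diagonalization target $g$, always committing new functions to fresh large indices of $J_{\beta}^2$ and preserving the invariant that the classes committed so far, together with $\{[\vec{\mu}]_p\}$, are linearly independent. For a spanning target whose class is not yet in the current span $W$, I add a one-to-one representative (available by the first preparatory fact) as a new basis vector. For a diagonalization target $g$, I thin $g$ to the $\Delta$-system subsequence above, partition $\omega$ into infinitely many infinite pieces $\{P_j:j\in\omega\}$, and consider the subsequences $g\circ h_j$ along the increasing enumerations $h_j$ of the $P_j$; these are genuine one-to-one subsequences of $g$, so any two distinct ones furnish the pair $(\xi_0,\xi_1)$ and the increasing index sequences $n^{i}_{k}=h_i(k)$ demanded by iii). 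The purpose of producing infinitely many candidates with pairwise disjoint value-sets is that, using the $\Delta$-system structure, the classes $[g\circ h_j]_p$ are linearly independent over the span of the constants, while the current $W$ is generated by $\{[\vec{\mu}]_p\}$ together with fewer than $\mathfrak{c}$ classes; a single finite linear relation over $W$ can therefore absorb only finitely many of the $[g\circ h_j]_p$, leaving two, say indexed by $\xi_0,\xi_1$, that extend the independent family. A standard closing-off along the recursion (revisiting each spanning target) then shows the committed family spans, giving ii); its cardinality $\mathfrak{c}$ matches the dimension computed above, so every index of $J_{\beta}^2$ is used.

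The main obstacle is exactly the reconciliation of ii) and iii): the family must stay linearly independent, yet iii) forces us to insert, for each prescribed one-to-one $g$, subsequences of $g$ whose values we do not control and which may a priori already lie in the span built so far, a span that already contains all constant classes. The resolution I would carry out carefully is the disjoint-support device together with the $\Delta$-system independence of the thinned values of $g$: because the candidates $[g\circ h_j]_p$ have pairwise disjoint value-sets and are independent modulo the constants, no finite relation over the current $W$ can capture more than finitely many of them, so two always survive to enlarge the basis. Verifying this ``only finitely many are captured'' step, by ruling out relations modulo $p$ through the disjointness and independence of $\{g(n):n\in\omega\}$, is where the genuine work lies; the remainder is bookkeeping along the recursion of length $\mathfrak{c}$, exactly as in Lemma~2.1 of \cite{selective}.
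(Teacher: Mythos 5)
The paper does not actually prove this lemma: it is quoted from \cite{selective} (Lemma 2.1) with the remark that "a similar proof" works, so there is no in-paper argument to compare against. That said, your overall architecture --- a recursion of length $\mathfrak{c}$ interleaving spanning targets with diagonalization targets, selectivity entering through the constant-or-one-to-one dichotomy so that constants plus one-to-one classes can span, and a $\Delta$-system thinning of each prescribed $g$ so that subsequences of $g$ are independent modulo the constants --- is the right one and matches the strategy of the cited lemma. The bookkeeping for i) and for exhausting the index set $J_{\beta}^2$ is also fine in outline.

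There is, however, a genuine gap at exactly the step you single out as the crux. You produce only countably many candidates $[g\circ h_j]_p$, $j\in\omega$, linearly independent over the span $C$ of the constant classes, and argue that since the current span $W$ is generated by $C$ together with fewer than $\mathfrak{c}$ classes, ``a single finite linear relation over $W$ can absorb only finitely many of the $[g\circ h_j]_p$.'' This inference fails: each candidate lying in $W$ is witnessed by its own finite relation, and $\dim(W/C)$, while less than $\mathfrak{c}$, is infinite at all but the first few stages of the recursion, so nothing prevents $W$ from containing all countably many candidates (earlier spanning targets could literally have been representatives of the classes $[g\circ h_j]_p$), or all of their pairwise sums; in either case no two candidates survive with distinct nonzero images modulo $W$ and the construction stalls. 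The standard repair is to take $\mathfrak{c}$ many candidates, say $g\circ h_A$ for $A$ ranging over an almost disjoint family of size $\mathfrak{c}$ of infinite subsets of the thinned index set. The same $\Delta$-system computation shows these are linearly independent over $C$: for almost disjoint $A\neq A'$ one has $h_A(n)\neq h_{A'}(n)$ for all but finitely many $n$, so the relevant symmetric differences contain moving, pairwise disjoint, nonempty petals and can equal a fixed finite set only finitely often. Hence at most $\dim(W/C)<\mathfrak{c}$ of them lie in $W$; and if all surviving candidates had the same image modulo $W$, their pairwise sums would yield $\mathfrak{c}$ many elements of $W$ independent over $C$, again contradicting $\dim(W/C)<\mathfrak{c}$. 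Thus two candidates with distinct nonzero images modulo $W$ always exist, and with this replacement the rest of your argument goes through.
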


In what follows, we fix a family $\{f_{\xi}: \xi \in J_2\}$ so that, for each $\beta < \mathfrak{c}$, $(f_{\xi})_{\xi \in J_{\beta}^2}$ satisfy the three properties stated in the previous result. In this case, it is not hard to show that $\{[f_{\xi}]_p: \xi \in J_2\} \cup \{[\vec{\mu}]_p: \mu < \mathfrak{c}\}$ is linearly independent in $([\mathfrak{c}]^{<\omega})^{\omega}/p$ and hence one may repeat the proof of \textit{Lemma 2.3} in \cite{selective} to show that\footnotemark:
\footnotetext{To adapt the proof, we consider $F_0 \doteq E_0$ and, for each $n \geq 0$, $F_{n+1} \doteq F_n \cup \Big[ \bigcup_{\xi \in F_n \cap J_2} \bigcup_{m \leq n}f_{\xi}(m)\Big]$. The family $\{E_i: 0<i< \omega\}$ will be a subsequence of $\{F_i: 0<i< \omega\}$. } 
\begin{Lema}[\cite{selective}, Lemma 2.3]
Let $p \in \omega^*$ be selective. For every $E_0 \in [\mathfrak{c}]^{<\omega}\setminus \{\emptyset\}$, there are $\{b_i: i < \omega\} \in p$ and $\{E_i: 0<i<\omega\} \subset [\mathfrak{c}]^{<\omega}$ such that
\begin{enumerate}[1)]
    \item $E_i \cup \big[ \bigcup_{\xi \in E_i \cap J_2}f_{\xi}(b_i)\big] \subset E_{i+1}$, for every $i < \omega$; 
    \item $\{f_{\xi}(b_i):\xi \in E_i \cap J_2\} \cup \{\{\mu\}: \mu \in E_i\}$ is linearly independent, for every $i < \omega$.
\end{enumerate}
\end{Lema}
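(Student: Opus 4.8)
The plan is to isolate one combinatorial engine---that pointwise linear independence occurs on a set of $p$---and then to produce the sequence $\{b_i:i<\omega\}$ by a diagonalization adapted to the selective ultrafilter $p$. For a finite $E\in[\mathfrak c]^{<\omega}$ set
\[
G(E):=\{b\in\omega:\{f_\xi(b):\xi\in E\cap J_2\}\cup\{\{\mu\}:\mu\in E\}\ \text{is linearly independent in }[\mathfrak c]^{<\omega}\}.
\]
I first check that $G(E)\in p$ for every finite $E$. A point $b$ lies outside $G(E)$ precisely when $\bigtriangleup_{\xi\in S}f_\xi(b)\,\triangle\,\bigtriangleup_{\mu\in T}\{\mu\}=\emptyset$ for some pair $(S,T)\neq(\emptyset,\emptyset)$ with $S\subseteq E\cap J_2$ and $T\subseteq E$. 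For each fixed such pair the corresponding vector $\sum_{\xi\in S}[f_\xi]_p+\sum_{\mu\in T}[\vec\mu]_p$ of the quotient $([\mathfrak c]^{<\omega})^{\omega}/p$ is nonzero, by the linear independence of $\{[f_\xi]_p:\xi\in J_2\}\cup\{[\vec\mu]_p:\mu<\mathfrak c\}$ recorded above; hence the set of $b$ realizing that one relation is not in $p$. Since only finitely many pairs $(S,T)$ occur, the union of these bad sets is not in $p$, so its complement $G(E)$ belongs to $p$.

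I next arrange condition 1) to be automatic. Following the footnote, I put $F_0:=E_0$ and $F_{n+1}:=F_n\cup\big[\bigcup_{\xi\in F_n\cap J_2}\bigcup_{m\le n}f_\xi(m)\big]$, each $F_n$ finite, and $F:=\bigcup_{n\in\omega}F_n$. Then $F$ is closed under the maps $f_\xi$, in the sense that $f_\xi(m)\subseteq F$ for all $\xi\in F\cap J_2$ and all $m$. If the sets $E_i$ are taken along a subsequence $E_i:=F_{k_i}$ of the $F_n$, then for $\xi\in F_{k_i}\cap J_2$ and any $b_i$ one has $f_\xi(b_i)\subseteq F_{\max(k_i,b_i)+1}$; so choosing $k_{i+1}>\max(k_i,b_i)$ already gives $E_i\cup\big[\bigcup_{\xi\in E_i\cap J_2}f_\xi(b_i)\big]\subseteq E_{i+1}$. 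Thus condition 1) costs nothing, and the only real requirement becomes condition 2), i.e.\ $b_i\in G(E_i)$ at every stage.

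The heart of the proof is to select $\{b_i:i<\omega\}\in p$ meeting all of these constraints at once. The constraint at stage $i$ depends on the entire history $b_0,\dots,b_{i-1}$ (through the indices $k_j$, hence through $E_i$), so I would set it up along finite tuples: for each increasing finite tuple $s=(c_0,\dots,c_{j-1})$ define recursively $k_s$ (by $k_\emptyset$ with $F_{k_\emptyset}=E_0$ and $k_{s^\frown c}:=\max(k_s,c)+1$), put $E_s:=F_{k_s}$ and $A_s:=G(E_s)\in p$. Invoking the Ramsey-type behaviour of $p$, I would obtain $B=\{b_0<b_1<\cdots\}\in p$ with the property that $b\in A_s$ whenever $s\in[B]^{<\omega}$ and $b\in B$ with $b>\max s$. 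Writing $s_i:=(b_0,\dots,b_{i-1})$ and $E_i:=E_{s_i}$, taking $s=s_i$ and $b=b_i$ yields $b_i\in A_{s_i}=G(E_i)$, which is exactly condition 2); condition 1) holds by the choice of the $k_s$, and $E_0=F_{k_\emptyset}$ agrees with the given set.

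The main obstacle is precisely this final diagonalization. Because the stage-$i$ requirement refers to all earlier $b_j$, a single colouring of $[\omega]^2$ (item c) of the characterization of selective ultrafilters) does not suffice on its own; what is needed is the full local Ramsey property of $p$---that for any assignment $s\mapsto A_s\in p$ on $[\omega]^{<\omega}$ there is $B\in p$ with $b\in A_s$ whenever $s\in[B]^{<\omega}$ and $b\in B\setminus(\max s+1)$. This property does follow from selectivity (it is obtained from the $[\omega]^2$ partition property by induction on $|s|$, in the Galvin--Prikry manner), and it is exactly the strengthening used in Lemma 2.3 of \cite{selective}. Everything else---the finite linear-algebra bookkeeping behind $G(E)\in p$ and the routine verification of 1) and 2)---follows from the facts assembled above.
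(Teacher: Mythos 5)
Your argument is correct and follows essentially the route the paper intends (the paper only cites Lemma 2.3 of \cite{selective} together with the footnote introducing the sets $F_n$ and taking the $E_i$ as a subsequence of them, which is exactly your setup): you derive $G(E)\in p$ from the linear independence of $\{[f_\xi]_p:\xi\in J_2\}\cup\{[\vec{\mu}]_p:\mu<\mathfrak c\}$ in $([\mathfrak c]^{<\omega})^{\omega}/p$ and then diagonalize using the selectivity of $p$. The only remark worth making is that your tuple-indexed local Ramsey property is slightly heavier machinery than necessary: since the $F_n$ are determined by $E_0$ alone and $E\mapsto G(E)$ is antitone, the stage-$i$ constraint reduces to membership in the decreasing family $G(F_n)\in p$, which the standard P-point-plus-Q-point diagonalization of a selective ultrafilter already handles.
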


Now, we can show the lemma that will replace Lemma \ref{Homs}. A similar result was also proved in \cite{selective} (see \textit{Example 2.4}) but we repeat the proof here, for the sake of completeness. 
\begin{Lema}\label{novohom}
Let $p \in \omega^*$ be a selective ultrafilter and $D \in [\mathfrak{c}]^{\omega}$ be such that, for every $\alpha \in D \cap J_2$, $\bigcup_{n \in \omega}f_{\alpha}(n) \subset D$. Then, for each $E_0 \in [D]^{<\omega} \setminus \{\emptyset\}$, there exists a homomorphism $\Phi: [D]^{<\omega} \to 2$ such that
\begin{enumerate}
    \item $\Phi(\{\xi\})=p-\lim_{n \in \omega} \Phi(f_{\xi}(n))$, for every $\xi \in D \cap J_2$.
    \item $\Phi(E_0)=1$.
\end{enumerate}
\end{Lema}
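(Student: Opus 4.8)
The plan is to feed $E_0$ into the preceding lemma (Lemma 2.3 of \cite{selective}) and then build $\Phi$ in two phases: first on the countable ``skeleton'' produced by that lemma, where the single selective ultrafilter $p$ lets me force both $\Phi(E_0)=1$ and the $p$-limit equations along one fixed set belonging to $p$; and then on the rest of $[D]^{<\omega}$ by a transfinite recursion that makes the remaining $p$-limit equations hold by fiat.

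First I would apply the preceding lemma to $E_0\in [D]^{<\omega}\setminus\{\emptyset\}$, obtaining a set $B=\{b_i:i<\omega\}\in p$ and an increasing chain $E_0\subset E_1\subset\cdots$ of finite sets with $f_\xi(b_i)\subset E_{i+1}$ for each $\xi\in E_i\cap J_2$, and with $\{f_\xi(b_i):\xi\in E_i\cap J_2\}\cup\{\{\mu\}:\mu\in E_i\}$ linearly independent for every $i$. Here I would note that, since $E_0\subset D$ and $D$ is closed under the $f_\xi$ (that is, $\bigcup_n f_\xi(n)\subset D$ for every $\xi\in D\cap J_2$), the closure procedure described in the footnote keeps every $E_i$ inside $D$, so the whole skeleton lives in $[D]^{<\omega}$.

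Next I would construct a chain of homomorphisms $\Phi_i:[E_i]^{<\omega}\to 2$ with $\Phi_{i+1}\supset\Phi_i$. For the base step, since $E_0\neq\emptyset$ I would pick $\mu_0\in E_0$ and set $\Phi_0(\{\mu_0\})=1$ and $\Phi_0(\{\mu\})=0$ for the remaining $\mu\in E_0$, so that $\Phi_0(E_0)=1$. For the inductive step the linear independence of $\{f_\xi(b_i):\xi\in E_i\cap J_2\}\cup\{\{\mu\}:\mu\in E_i\}$ inside $[E_{i+1}]^{<\omega}$ is exactly what I need: it lets me extend $\Phi_i$ to a homomorphism on $[E_{i+1}]^{<\omega}$ that additionally satisfies $\Phi_{i+1}(f_\xi(b_i))=\Phi_i(\{\xi\})$ for every $\xi\in E_i\cap J_2$ (complete the independent set to a basis of $[E_{i+1}]^{<\omega}$ and assign the remaining basis values arbitrarily). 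Letting $\Phi'=\bigcup_i\Phi_i$, a homomorphism on $\big[\bigcup_i E_i\big]^{<\omega}$, I would then observe that for each $\xi\in\big(\bigcup_i E_i\big)\cap J_2$, if $i_\xi$ is the least index with $\xi\in E_{i_\xi}$, then $\Phi'(f_\xi(b_i))=\Phi'(\{\xi\})$ for all $i\ge i_\xi$; since $\{b_i:i\ge i_\xi\}$ is a cofinite subset of $B\in p$ it still belongs to $p$, whence $\Phi'(\{\xi\})=p\text{-}\lim_n\Phi'(f_\xi(n))$.

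Finally I would extend $\Phi'$ to all of $[D]^{<\omega}$. Since $[D]^{<\omega}$ has the singletons $\{\{\xi\}:\xi\in D\}$ as a basis, it suffices to define $\Phi$ on the singletons $\{\xi\}$ with $\xi\in D\setminus\bigcup_i E_i$, which I would do by recursion on the ordinal $\xi$, putting $\Phi(\{\xi\})=p\text{-}\lim_n\Phi(f_\xi(n))$ when $\xi\in J_2$ and $\Phi(\{\xi\})=0$ otherwise. This recursion is well-founded because $\rng(f_\xi)\subset[\xi]^{<\omega}$, so $\Phi(f_\xi(n))$ depends only on singletons $\{\eta\}$ with $\eta<\xi$ (and $\eta\in D$ by the closure hypothesis), all already defined, and the $p$-limit of a $\{0,1\}$-valued sequence always exists. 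The resulting $\Phi$ satisfies condition (2) because $\Phi\supset\Phi_0$, and it satisfies condition (1) for $\xi\in\bigcup_i E_i$ by the previous paragraph and for $\xi\notin\bigcup_i E_i$ by the defining recursion. The main obstacle is the inductive construction of the $\Phi_i$, namely forcing the $p$-limit equations to hold simultaneously for all skeleton points while keeping $\Phi$ a single consistent homomorphism; this is precisely what the linear-independence clause of the preceding lemma, together with $B\in p$, is designed to supply.
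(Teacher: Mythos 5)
Your proposal is correct and follows essentially the same route as the paper's own proof: apply the preceding lemma to $E_0$ to get the chain $(E_i)$ and the set $\{b_i:i<\omega\}\in p$, build the $\Phi_i$ by exploiting the linear independence of $\{f_\xi(b_i):\xi\in E_i\cap J_2\}\cup\{\{\mu\}:\mu\in E_i\}$, and then extend to $[D]^{<\omega}$ by recursion on the remaining singletons, setting $\Phi(\{\xi\})=0$ on $J_1$ and enforcing the $p$-limit equation on $J_2$. The only differences are cosmetic (starting from $\Phi_0$ on $[E_0]^{<\omega}$ rather than $\Phi_1$ on $[E_1]^{<\omega}$, and spelling out the well-foundedness of the final recursion, which the paper leaves implicit).
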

\begin{proof}
By applying the previous lemma to $E_0$, we obtain $\{b_i: i< \omega\} \in p$ and $\{E_i: 0<i<\omega\} \subset [\mathfrak{c}]^{<\omega}$ such that\footnotemark
\footnotetext{Note that due to the form of $\{E_i: 0<i<\omega\}$ sets (see the previous footnote), we have that  $ \bigcup_{i \in \omega}E_i \subset D$.}
\begin{enumerate}[1)]
     \item $E_i \cup \big[ \bigcup_{\xi \in E_i \cap J_2}f_{\xi}(b_i)\big] \subset E_{i+1}$, for every $i < \omega$;
    \item $\{f_{\xi}(b_i):\xi \in E_i \cap J_2\} \cup \{\{\mu\}: \mu \in E_i\}$ is linearly independent, for every $i < \omega$.
\end{enumerate}

Since $\{f_{\xi}(b_0): \xi \in E_0 \cap J_2\} \cup \{\{\mu\}: \mu \in E_0\}$ is linearly independent and $E_0 \cup \big[ \bigcup_{\xi \in E_0 \cap J_2}f_{\xi}(b_0)\big] \subset E_{1}$, we may define a homomorphism $\Phi_1:[E_1]^{<\omega} \to 2$ such that $\Phi_1(E_0)=1$ and $\Phi_1(f_{\xi}(b_0))=\Phi_1(\{\xi\})$ for every $\xi \in J_2 \cap E_0$.

Suppose that, for $0< i < \omega$, we have defined $\Phi_i:[E_i]^{<\omega}\to 2$ so that $\Phi_i(E_0)=1$ and $\Phi_i(f_{\xi}(b_{i-1}))=\Phi_i(\{\xi\})$ for every $\xi \in J_2 \cap E_{i-1}$. Since $\{f_{\xi}(b_{i}): \xi \in E_{i} \cap J_2\} \cup \{\{\mu\}: \mu \in E_{i}\}$ is linearly independent and $E_i \cup \Big[ \bigcup_{\xi \in E_i \cap J_2}f_{\xi}(b_i) \Big]\subset E_{i+1}$, we may define a homomorphism $\Phi_{i+1}: [E_{i+1}]^{<\omega} \to 2$ extending $\Phi_i$ so that $\Phi_{i+1}(f_{\xi}(b_{i}))=\Phi_{i+1}(\{\xi\})$ for every $\xi \in E_{i} \cap J_2$. Thus, such homomorphisms $\Phi_i$ exist for every $i > 0$. 

Now let $E \doteq \bigcup_{n \in \omega} E_n$ and $\psi \doteq \bigcup_{n >0} \Phi_n : [E]^{<\omega} \to 2$. We may extend $\psi$ to a homomorphism $\Phi$ defined on $[D]^{<\omega}$ by putting $\Phi(\{\xi\})=0$ if $\xi \in J_1 \cap (D \setminus E)$ and then, recursively, 
\[
\Phi(\{\xi\}) = p-\lim_{n \in \omega} \Phi(f_{\xi}(n)),
\]
for every $\xi \in J_2 \cap (D \setminus E$). The homomorphism $\Phi:[D]^{<\omega} \to 2$ satisfy the required properties. Indeed, if $\xi \in J_2 \cap (D \setminus E)$, then (1) follows by the previous equation. On the other hand, if $\xi \in J_2 \cap E$ and $j \in \omega$ is such that $\xi \in E_j$, then $\{b_i: i \geq j\} \subset \{n \in \omega: \Phi(f_{\xi}(n))=\Phi(\{\xi\})\} $ and hence 
\[
\Phi(\{\xi\})= p-\lim_{n \in \omega} \Phi(f_{\xi}(n)).
\]
\end{proof}

Next, we show the main result of this section. Since the arguments are analogous to those in the proof of Theorem \ref{pracompacto}, we omit some details. As before, a set $A \in [\mathfrak{c}]^{\omega}$ will be called \textit{suitably closed} if, for every $\xi \in J_2 \cap A$, $\bigcup_{n \in \omega} f_{\xi}(n) \subset A$.
\begin{Teo} 
If $p \in \omega^*$ is a selective ultrafilter, there exists a Hausdorff selectively $p-$pseudocompact group which is not countably pracompact.
\end{Teo}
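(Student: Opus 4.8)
The plan is to reproduce the construction in the proof of Theorem \ref{pracompacto} essentially verbatim, with the single selective ultrafilter $p$ playing the role that the whole family $\{p_\xi:\xi<\mathfrak{c}\}$ played there. Concretely, I would keep the same $F:\mathfrak{c}\times\mathfrak{c}\to 2$, the same partitions $(I_\alpha)_{\alpha<\mathfrak{c}}$, $(J_\beta)_{\beta<\mathfrak{c}}$ and $\{J_\beta^1,J_\beta^2\}$, and the same enumerations $g_\xi$ of all finite partial functions into $2$; the only change is that I replace the families used before by the functions $\{f_\xi:\xi\in J_2\}$ furnished by Lemma \ref{funcoes}, and I let $\mathcal{A}$ be the set of homomorphisms $\sigma:[A]^{<\omega}\to 2$ with $A$ suitably closed and $\sigma(\{\xi\})=p-\lim_{n\in\omega}\sigma(f_\xi(n))$ for every $\xi\in A\cap J_2$, now with the single $p$. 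After enumerating $\mathcal{A}$ as $\{\sigma_\mu:\omega\le\mu<\mathfrak{c}\}$ and extending each $\sigma_\mu$ to $\overline{\sigma_\mu}:[\mathfrak{c}]^{<\omega}\to 2$ exactly as in Theorem \ref{pracompacto} (first building an auxiliary $\psi_\mu$ through the $g_\xi$, then twisting by $F$), I would let $\tau$ be the weakest group topology making every $\overline{\sigma_\mu}$ continuous. As before, one gets $\overline{\sigma_\mu}(\{\xi\})=p-\lim_{n\in\omega}\overline{\sigma_\mu}(f_\xi(n))$ for all $\xi\in J_2$ and all $\mu$, hence $\{\xi\}=p-\lim_{n\in\omega}f_\xi(n)$ in $\tau$.

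For the Hausdorff property I would argue exactly as in Theorem \ref{pracompacto}, the only difference being that Lemma \ref{novohom} replaces Lemma \ref{Homs}: given $x\in[\mathfrak{c}]^{<\omega}\setminus\{\emptyset\}$, pick a suitably closed $D\supset x$ and $\alpha<\mathfrak{c}$ with $F(\{\alpha\}\times D)=1$, apply Lemma \ref{novohom} with $E_0=x$ to obtain $\sigma\in\mathcal{A}$ on $[D]^{<\omega}$ with $\sigma(x)=1$, and choose $\mu_0\in I_\alpha$ with $\sigma_{\mu_0}=\sigma$, so that $\overline{\sigma_{\mu_0}}(x)=1$.

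The new content is selective $p$-pseudocompactness, and this is the step I expect to be the main obstacle, since here the \emph{single} ultrafilter $p$ must witness the limit of the \emph{whole} sequence rather than merely producing an accumulation point along some $p_\xi$ as in Theorem \ref{pracompacto}. Given nonempty open $U_n$, I would first shrink them to basic sets determined by finite partial functions $g_n:S_n\to 2$, put $C=\bigcup_n\{\alpha:\exists\,\mu\in S_n,\ \mu\in I_\alpha\}$, and use the first property of $F$ to fix $\beta<\mathfrak{c}$ with $F(C\times\{\beta\})=1$. Choosing pairwise distinct $\xi_n\in J_\beta^1$ with $g_{\xi_n}=g_n$ gives $\{\xi_n\}\in U_n$ for every $n$, exactly as before. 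The crucial new point is that the full sequence $(\{\xi_n\})_{n\in\omega}$ already has a $p$-limit: viewing $g(n)=\{\xi_n\}$ as an element of $([J_\beta]^{<\omega})^{\omega}$ and invoking property (ii) of Lemma \ref{funcoes}, $[g]_p$ is a finite sum $[f_{\eta_1}]_p\triangle\cdots\triangle[f_{\eta_s}]_p\triangle[\vec{\mu_1}]_p\triangle\cdots\triangle[\vec{\mu_t}]_p$ of basis elements, with $\eta_j\in J_\beta^2$ and $\mu_i\in J_\beta$. Since $g$ agrees with this combination on a set in $p$, since $\triangle$ is continuous so that $p$-limits distribute over finite sums, since $\{\eta_j\}=p-\lim_{n}f_{\eta_j}(n)$, and since constant sequences $p$-converge to themselves, I would conclude that $p-\lim_{n}\{\xi_n\}=\{\eta_1\}\triangle\cdots\triangle\{\eta_s\}\triangle\{\mu_1\}\triangle\cdots\triangle\{\mu_t\}$ exists in $([\mathfrak{c}]^{<\omega},\tau)$. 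Taking $x_n=\{\xi_n\}$ then witnesses selective $p$-pseudocompactness, and note that no subsequence juggling is needed, which is precisely what makes the single fixed $p$ work.

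Finally, I would copy the non-pracompactness argument of Theorem \ref{pracompacto} almost verbatim: the $\mathrm{SUPP}$ function, the fact that $\bigcup_{z\in Z}\mathrm{SUPP}(z)$ is uncountable for any dense $Z$ (via the second property of $F$, producing an $\alpha$ with $\overline{\sigma_\mu}\equiv 0$ on $Z$ for $\mu\in I_\alpha$ otherwise), and the inductive construction of pairwise disjoint suitably closed sets $A_i$ carrying homomorphisms $\theta_i\in\mathcal{A}$ that separate a candidate accumulation point $x$ (value $0$) from the chosen sequence $\{t_n\}$ (value $1$). The only adjustment is that each appeal to Lemma \ref{Homs} is replaced by an appeal to Lemma \ref{novohom} (or a routine variant giving the prescribed values on the relevant finite set); gluing the $\theta_i$ into $\theta\in\mathcal{A}$ and twisting by an $\alpha$ with $F\big(\{\alpha\}\times(\bigcup_n\mathrm{SUPP}(t_n)\cup\mathrm{SUPP}(x))\big)=1$ yields an $\overline{\sigma_\mu}$ that is $1$ on every $t_n$ and $0$ on $x$, so $x$ is not an accumulation point. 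Since selective $p$-pseudocompactness is productive and implies selective pseudocompactness, the same group has all powers selectively pseudocompact, which gives the advertised consequence.
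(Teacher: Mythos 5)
Your proposal is correct and follows essentially the same route as the paper: the same $F$, partitions, enumeration of $\mathcal{A}$, and twisted extensions $\overline{\sigma_\mu}$, with Lemma \ref{novohom} replacing Lemma \ref{Homs} for the Hausdorff and non-pracompactness arguments, and with selective $p$-pseudocompactness obtained exactly as in the paper by expanding $[g]_p$ in the basis from Lemma \ref{funcoes}(ii) so that the single ultrafilter $p$ witnesses the limit of the whole sequence $(\{\xi_n\})_{n\in\omega}$. No substantive differences.
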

\begin{proof}
Let $\mathcal{A}$ be the set of all homomorphisms $\sigma: [A]^{< \omega} \to 2$, where $A$ is a suitably closed set, satisfying that
\[
\sigma(\{\xi\}) = p-\lim_{n \in \omega} \sigma(f_{\xi}(n)),
\]
for every $\xi \in A \cap J_2$.
Enumerate $\mathcal{A}$ by $\{\sigma_{\mu}: \omega \leq \mu < \mathfrak{c}\}$, assuming that $\bigcup \text{dom}(\sigma_{\mu}) \subset \mu$, for each $\omega \leq \mu < \mathfrak{c}$, and also that for each $\sigma \in \mathcal{A}$ and $\alpha< \mathfrak{c}$, there exists $\mu \in I_{\alpha}$ so that $\sigma_{\mu}= \sigma$. As before, we shall construct a suitable homomorphism $\overline{\sigma_{\mu}}:[\mathfrak{c}]^{< \omega} \to 2$, for each $\omega \leq \mu < \mathfrak{c}$.

We consider the same enumeration $\{g_{\xi}: \xi \in J_1\}$ of all functions $g: S \to 2$ with $S \in [\mathfrak{c}]^{<\omega}$ fixed in Theorem \ref{pracompacto}. For each $\omega \leq \mu < \mathfrak{c}
$, we define the auxiliary homomorphism  $\psi_{\mu}:[\mathfrak{c}]^{<\omega} \to 2$, extending $\sigma_{\mu}$, in the following way. If $\xi < \mathfrak{c}$ is such that $\{\xi\} \in \text{dom}(\sigma_\mu)$, we put $\psi_{\mu}(\{\xi\})= \sigma_{\mu}(\{\xi\})$. Otherwise, we have a few cases to consider: firstly, for every $\xi \in J_1$, we put $\psi_{\mu}(\{\xi\}) = g_{\xi}(\mu)$ if $\mu \in \text{dom}(g_{\xi})$ and $\psi_{\mu}(\{\xi\})=0$ if $\mu \notin \text{dom}(g_{\xi})$; for elements $\xi \in J_2$, we define $\psi_{\mu}$ recursively, by putting
\[
\psi_{\mu}(\{\xi\})= p-\lim_{n \in \omega}\psi_{\mu}(f_{\xi}(n)).
\]
It is not hard to see that the homomorphism $\psi_{\mu}:[\mathfrak{c}]^{<\omega} \to 2$ satisfy the previous equation for every $\xi \in J_2$.

Now, for every $\omega \leq \mu < \mathfrak{c}$ and $\xi<\mathfrak{c}$, we have that $\mu \in I_{\alpha}$ and $\xi \in J_{\beta}$ for unique $\alpha, \beta < \mathfrak{c}$, hence we may put
\[
\begin{cases}
\overline{\sigma_{\mu}}(\{\xi\}) = 0, & \text{if }  F(\alpha, \beta)=0\\
        \overline{\sigma_{\mu}}(\{\xi\}) = \psi_{\mu}(\{\xi\}),    & \text{if } F(\alpha, \beta)=1.
\end{cases}\]
As before, in this way we define uniquely a non-trivial homomorphism $\overline{\sigma_{\mu}}:[\mathfrak{c}]^{<\omega} \to 2$ so that, for every $\xi \in J_2$ and $\omega \leq \mu <\mathfrak{c}$, 
\[
\overline{\sigma_{\mu}}(\{\xi\}) = p-\lim_{n \in \omega} \overline{\sigma_{\mu}}(f_{\xi}(n)).
\]

Let now $\overline{\mathcal{A}} \doteq \{\overline{\sigma_{\mu}}: \omega \leq \mu < \mathfrak{c}\}$ and $\tau$ be the weakest (group) topology on $[\mathfrak{c}]^{<\omega}$ making every homomorphism in $\overline{\mathcal{A}}$ continuous. The topological group $([\mathfrak{c}]^{<\omega},\tau)$ is Hausdorff. Indeed, given $x \in [\mathfrak{c}]^{<\omega}\setminus \{\emptyset\}$, let $D \in [\mathfrak{c}]^{\omega}$ be a suitably closed set so that $x \subset D$, and let $\alpha < \mathfrak{c}$ be such that $F(\{\alpha\} \times D)$=1. According to Lemma \ref{novohom}, there exists $\sigma:[D]^{<\omega}\to 2$, $\sigma \in \mathcal{A}$, such that $\sigma(x)=1$ and, by construction, there exists $\mu_0 \in I_{\alpha}$ such that $\sigma_{\mu_0}=\sigma$. Hence, $\overline{\sigma_{\mu_0}}(x)=1$.

\begin{Cla}
$([\mathfrak{c}]^{<\omega},\tau)$ is a selectively $p-$pseudocompact group.
\end{Cla}
\begin{proof}[Proof of the claim.]\renewcommand{\qedsymbol}{$\blacksquare$}
Let $\{U_n: n \in \omega\}$ be a sequence of nonempty open sets in the group. We proceed the same way as in \textbf{Claim 1} of Theorem \ref{pracompacto} to construct a sequence of pairwise distinct elements $\{\{\xi_n\}: n \in \omega\}$ such that $\{\xi_n\} \in U_n$ for each $n \in \omega$ and such that, for some fixed $\beta<\mathfrak{c}$, $\xi_n \in J_{\beta}$ for every $n \in \omega$. 

Let $g \in ([J_{\beta}]^{<\omega})^{\omega}$ be such that $g(n)=\{\xi_n\}$, for every $n \in \omega$. Since $\{[f_{\xi}]_p: \xi \in J_{\beta}^2\} \cup \{[\vec{\mu}]_{p}: \mu \in J_{\beta}\}$ is a base for $([J_{\beta}]^{<\omega})^{\omega}/p$, there exists $\eta_0,..., \eta_k \in J_{\beta}^2$ and $E \in [J_{\beta}]^{<\omega}$ such that 
\[
[g]_p = (\triangle_{i \leq k} [f_{\eta_i}]_p) \triangle (\triangle_{\mu \in E}[\vec{\mu}]_p).
\]
Hence, there exists $B \in p$ such that, for every $n \in B$,
\[
\{\xi_n\} =  (\triangle_{i \leq k} f_{\eta_i}(n)) \triangle (\triangle_{\mu \in E} \{\mu\}) =  (\triangle_{i \leq k} f_{\eta_i}(n)) \triangle E.
\]
Therefore, since for each $i=0,...,k$ we have
\[
\{\eta_i\} = p-\lim_{n \in \omega} f_{\eta_i}(n)
\]
by construction, it follows that
\[
(\triangle_{i \leq k} \{\eta_i\}) \triangle E = p-\lim_{n \in \omega} \{\xi_n\},
\]
and thus $\{\{\xi_n\}: n \in \omega\}$ has a $p-$limit.
\end{proof}

\begin{Cla}
$([\mathfrak{c}]^{<\omega}, \tau)$ is not a countably pracompact group.
\end{Cla}
\begin{proof}[Proof of the claim]\renewcommand{\qedsymbol}{$\blacksquare$}
It is the same proof done in \textbf{Claim 2} of Theorem \ref{pracompacto}, just changing the use of Lemma \ref{Homs} for Lemma \ref{novohom}.
\end{proof}

\end{proof}

\section{Some remarks and questions}

There are still many open questions regarding the pseudocompact-like properties in topological groups. For instance, \cite{tomita1} asks:
\begin{Ques}
Is there a pseudocompact, non-selectively pseudocompact group which is connected?
\end{Ques}
\begin{Ques}
If an Abelian group admits a pseudocompact group topology, does it admit a selectively pseudocompact group topology?
\end{Ques}
\begin{Ques}
Does every compact group admit a proper dense selectively pseudocompact subgroup?
\end{Ques}

In \textbf{Claim 2} of Theorem \ref{pracompacto}, we proved that if $Z \subset ([\mathfrak{c}]^{<\omega},\tau)$ is a dense subset, then $B \doteq \bigcup_{z \in Z} \text{SUPP}(z)$ is not countable. In particular, this shows that the group we have constructed is not separable. Thus, another interesting question is:
\begin{Ques}
Is there a separable selectively pseudocompact group which is not countably pracompact?
\end{Ques}

Productivity of pseudocompact-like properties in topological groups has been widely studied in the last years. As mentioned previously, Comfort and Ross proved that the product of any family of pseudocompact groups is pseudocompact \cite{comross}, and Hrušák, van Mill, Ramos-García, and Shelah proved that there exists two countably compact topological groups whose product is not countably compact \cite{Michael}. Nevertheless, the following Comfort-like questions remain unsolved in ZFC.
\begin{Ques}
Is it true in ZFC that selective pseudocompactness is non-productive in the class of topological groups?
\end{Ques}
\begin{Ques}
Is it true in ZFC that countable pracompactness is non-productive in the class of topological groups?
\end{Ques}
\noindent Using tools outside ZFC, there are already solutions to the questions above. For instance, Garcia-Ferreira and Tomita proved that if $p$ and $q$ are non-equivalent (according to the Rudin-Keisler ordering in $\omega^*$) selective ultrafilters on $\omega$, then there are a $p$-compact group and a $q$-compact group whose product is not selectively pseudocompact \cite{tomita2}. Also, Bardyla, Ravsky, and Zdomskyy constructed, under Martin's Axiom, a Boolean countably compact topological
group whose square is not countably pracompact \cite{Bardyla}.

Convergent sequences in topological groups are related to several important concepts concerning these spaces, and hence have also been studied for a long time. We point out that it is possible to find a counterexample to the famous problem posed by Wallace \cite {Wallace} (written in the next question) inside of any non-torsion countably compact topological group without non-trivial convergent sequences (according to \cite{walmartin} and \cite{waltomita}).
\begin{Ques}[\cite{Wallace}]
Is every countably compact topological semigroup with two-sided cancellation a topological group?
\end{Ques}
 A counterexample to Wallace’s question has been called a \textit{Wallace semigroup}. Hence, a positive answer to the following question would prove the existence of a Wallace semigroup in ZFC.
\begin{Ques}
Is there in ZFC a non-torsion countably compact topological group without non-trivial convergent sequences?
\end{Ques}
\noindent The known examples of Wallace semigroups are under CH \cite{walmartin}, Martin’s Axiom for countable posets \cite{waltomita}, $\mathfrak{c}$ incomparable selective ultrafilters (according to the Rudin-Keisler ordering in $\omega^*$) \cite{madariaga-garcia&tomita} and a single selective ultrafilter \cite{boero&castro&tomita2019}. With the exception of \cite{waltomita}, the articles mentioned obtained the examples as a semigroup of a countably compact free Abelian group without non-trivial convergent sequences. We point out that Fuchs showed that a non-trivial free Abelian group does not admit a compact Hausdorff group topology, however, it is not known whether the same is true for countably compact topologies.

\bibliographystyle{plain}
\bibliography{main}
  \Addresses

\end{document}